\newtheorem{thm}{Theorem}[section]
\newtheorem*{note}{Notation}
\newtheorem{cor}[thm]{Corollary}
\newtheorem{prop}[thm]{Proposition}
\theoremstyle{definition} 		
\newtheorem{defn}[thm]{Definition}
\newtheorem{remark}[thm]{Remark}
\newtheorem{exa}[thm]{Example}
\def\nat{{\mathbb N}}
\def\zat{{\mathbb Z}}
\def\qat{{\mathbb Q}}
\begin{document}
\title{Topological dynamics indexed by words}
\author{Vassiliki  Farmaki and Andreas Koutsogiannis}

\begin{abstract}
Starting with a combinatorial partition theorem for words over an infinite alphabet dominated by a fixed sequence, established recently by the authors, we prove recurrence results for topological dynamical systems indexed by such words. In this way we extend the classical theory developed by Furstenberg and Weiss of dynamical systems indexed by the natural numbers to systems indexed by words. Moreover, applying this theory to topological systems indexed by semigroups that can be represented as words we get analogous recurrence results for such systems.
\end{abstract}

\keywords{Topological dynamics, Ramsey theory, $\omega$-$\mathbb{Z}^\ast$-located words, rational numbers, IP-limits}
\subjclass{Primary 37Bxx; 54H20}

\maketitle
\baselineskip=18pt
\pagestyle{plain}


\section*{Introduction}
Furstenberg in collaboration with Weiss and Katznelson in the 1970's (\cite{Fu}, \cite{FuW}, \cite{FuKa}) connected fundamental combinatorial results, such as the partition theorems of van der Waerden (\cite{vdW}, 1927) and Hindman (\cite{H}, 1974), with topological dynamics and particularly with phenomena of (multiple) recurrence for suitable sequences of continuous functions defined on a compact metric space into itself.

The theorems of van der Waerden and Hindman were unified by a partition theorem for words over a finite alphabet of Carlson (\cite{C}, 1988); recently Carlson's theorem was essentially strengthened by the authors, in \cite{F}, \cite{FK}, to a partition theorem (Theorem~\ref{2}) for $\omega$-$\mathbb{Z}^\ast$-located words (i.e. words over an infinite alphabet dominated by a fixed sequence).

Our starting point in this work is a topological formulation of the partition theorem for $\omega$-$\mathbb{Z}^\ast$-located words (Theorem~\ref{3}). Introducing the notion of a dynamical system of continuous maps (homeomorphisms in the multiple case) from a compact metric space into itself indexed by $\omega$-$\mathbb{Z}^\ast$-located words, we apply this formulation to study (multiple) recurrence phenomena for these topological systems (Theorems~\ref{5}, ~\ref{114}), extending the earlier results of Birkhoff (\cite{Bi}) and Furstenberg-Weiss (\cite{Fu}, \cite{FuW}).

By making use of the representation of rational and integer numbers as $\omega$-$\mathbb{Z}^\ast$-located words (Example~\ref{exa1}) established by Budak-I\c{s}ik-Pym in \cite{BIP}, we obtain recurrence results for dynamical systems indexed by rational numbers or by the integers (Theorems~\ref{16}, ~\ref{17}, ~\ref{18}, ~\ref{19}). Moreover, we point out the way to obtain recurrence results for dynamical systems indexed by an arbitrary semigroup (Theorems~\ref{20}, ~\ref{22}).

\medskip

We will use the following notation.

\begin{note}
Let $\nat=\{1,2,\ldots\}$ be the set of natural numbers, $\mathbb{Z}=\{\ldots,-2,-1,0,1,2,\ldots\}$ the set of integer numbers, $\qat=\{\frac{m}{n}:m\in \zat,\;n\in \nat\}$ the set of rational numbers and $\zat^-=\{-n:n\in \nat\},$ $\zat^\ast=\zat\setminus\{0\},$ $\qat^{\ast}=\qat\setminus\{0\}.$
\end{note}

\section{A partition theorem for $\omega$-$\mathbb{Z}^\ast$-located words}

In this section we will introduce the $\omega$-$\mathbb{Z}^\ast$-located words and we will state a partition theorem for these words proved in \cite{FK}.

An {\bf{$\omega$-$\mathbb{Z}^\ast$-located word}} over the alphabet $\Sigma=\{\alpha_n: \;n\in \zat^\ast\}$ dominated by $\vec{k}=(k_n)_{n\in \zat^\ast},$ where $k_n\in \nat$ for every $n\in \zat^\ast$ and
$(k_n)_{n\in\nat}$, $(k_{-n})_{n\in\nat}$ are increasing sequences, is a function $w$ from a non-empty,  finite subset $F$ of $\mathbb{Z}^\ast$ into the alphabet $\Sigma$ such that $w(n)=w_n\in \{\alpha_1, \ldots, \alpha_{k_n}\}$ for every $n\in F\cap \nat$ and $w_n\in \{\alpha_{-k_{n}}, \ldots,\alpha_{-1}\}$ for every $n\in F\cap \mathbb{Z}^{-}$. So, the set
$\widetilde{L}(\Sigma, \vec{k})$ of all (constant) $\omega$-$\mathbb{Z}^\ast$-located words over $\Sigma$ dominated by $\vec{k}$ is:
\begin{center}$\widetilde{L}(\Sigma, \vec{k})= \{w=w_{n_1}\ldots w_{n_l} : l\in\nat, n_1<\ldots<n_l\in \mathbb{Z}^\ast$ and $ w_{n_i}\in
\{\alpha_1, \ldots, \alpha_{k_{n_i}}\}$ if $\;\;\;$\\
 $n_i> 0$, $ w_{n_i}\in \{\alpha_{-k_{n_i}}, \ldots,\alpha_{-1}\}$ if $n_i< 0$
for every $1\leq i \leq l \}.$
\end{center}
Analogously, the set of {\bf{$\omega$-located words}} over the alphabet $\Sigma=\{\alpha_n: \;n\in \nat\}$ dominated by the increasing sequence $\vec{k}=(k_n)_{n\in \nat}\subseteq \nat$ is
\begin{center}$L(\Sigma, \vec{k})= \{w=w_{n_1}\ldots w_{n_l} : l\in\nat, n_1<\ldots<n_l\in \nat$ and $ w_{n_i}\in
\{\alpha_1, \ldots, \alpha_{k_{n_i}}\}\;\;\;\;\;\;\;\;$\\ for every $1\leq i \leq l \}.\;\;\;\;\;\;\;\;\;\;\;\;\;\;\;\;\;\;\;\;\;\;\;\; \;\;\;\;\;\;\;\;\;\;\;\;\;\;\;\;\;\;\;\;\;\;\;\; \;\;\;\;\;\;\;\;\;\;\;\;\;$
\end{center}
\begin{exa}\label{exa1}
We will give some examples of sets that can be represented as $\omega$-$\mathbb{Z}^\ast$-located words.

(1) According to Budak-I\c{s}ik-Pym in \cite{BIP}, every rational number $q$  has a unique expression in the form $$q=\sum^{\infty}_{s=1}q_{-s}\frac{(-1)^{s}}{(s+1)!}\;+\;\sum^{\infty}_{r=1}q_{r}(-1)^{r+1}r! $$
where $(q_n)_{n \in \mathbb{Z}^\ast}\subseteq \nat\cup\{0\}$ with $\;0\leq q_{-s}\leq s$ for every $s>0$, $ 0\leq q_r\leq r$ for every $r> 0$ and $q_{-s}=q_r=0$ for all but finite many $r,s$. Setting $\Sigma=\{\alpha_n:\;n\in \mathbb{Z}^\ast\}$, where  $\alpha_{-n}=\alpha_n=n$ for $n\in \nat,$ and $\vec{k}=(k_n)_{n\in\mathbb{Z}^\ast}$, where $k_{-n}=k_n=n$ for $n\in \nat,$ the function
\begin{center}
$g^{-1}: \mathbb{Q}^{\ast}\rightarrow\widetilde{L}(\Sigma,\vec{k}),$
\end{center}
 which sends $q$ to the word  $w=q_{t_1}\ldots q_{t_l}\in \widetilde{L}(\Sigma,\vec{k})$, where $\{t_1,\ldots,t_l\}=\{t\in\mathbb{Z}^\ast : q_t \neq 0\},$ is one-to-one and onto.

(2) According to \cite{BIP}, for a given increasing sequence $(k_n)_{n\in \nat}\subseteq \nat$ with $k_n \geq 2$, every integer number $z\in \zat$ has a unique expression in the form $$z=\sum^{\infty}_{s=1}z_s(-1)^{s-1}l_{s-1}$$ where $l_0=1,$
$l_s=k_1\ldots k_s ,$ for $s\in\nat$ and $(z_s)_{s\in \nat}\subseteq \nat\cup\{0\}$ with $0\leq z_s\leq k_s$ for every $s\in\nat$ and $z_s=0$ for all but finite many $s.$ Setting $\Sigma=\{\alpha_n:\;n\in \nat\}$, where  $\alpha_n=n$, and $\vec{k}=(k_n)_{n\in\nat}$ the function
\begin{center}
$g^{-1}: \zat^{\ast}\rightarrow L(\Sigma,\vec{k}),$
\end{center}
which sends $z$ to the word  $w=z_{s_1}\ldots z_{s_t}\in L(\Sigma,\vec{k})$, where $\{s_1,\ldots,s_t\}=\{s\in\nat : z_s \neq 0\},$ is one-to-one and onto.

(3) For a given natural number $k>1,$ every natural number $n$ has a unique expression in the form $$n=\sum^{\infty}_{s=1}n_s k^{s-1}$$ where $(n_s)_{s\in \nat}\subseteq \nat\cup\{0\}$ with $0\leq n_s\leq k-1$ and $n_s=0$ for all but finite many $s.$ Setting $\Sigma=\{1,\ldots,k-1\}$   and $\vec{k}=(k_n)_{n\in\nat}$ with $k_n=k-1$ the function
\begin{center}
$g^{-1}: \nat\rightarrow L(\Sigma,\vec{k}),$
\end{center}
which sends $n$ to the word  $w=n_{s_1}\ldots n_{s_l}\in L(\Sigma,\vec{k}),$ where ${\{s_1,\ldots,s_l\}=\{s\in\nat : n_s \neq 0\},}$ is one-to-one and onto.
\end{exa}

Let $\Sigma=\{\alpha_n:\;n\in\mathbb{Z}^\ast\}$ be an alphabet,  $\vec{k}=(k_n)_{n\in\mathbb{Z}^\ast}\subseteq\nat$ such that
$(k_n)_{n\in\nat}$, $(k_{-n})_{n\in\nat}$ are increasing sequences and  $\upsilon \notin \Sigma$ be an entity which is called a {\bf{variable}}.

The set of {\bf{variable  $\omega$-$\mathbb{Z}^\ast$-located words}} over $\Sigma$ dominated by $\vec{k}$ is:
\begin{center}$\widetilde{L}(\Sigma, \vec{k} ; \upsilon) = \{w=w_{n_1}\ldots w_{n_l} : l\in\nat, n_1<\ldots<n_l\in \mathbb{Z}^\ast,$ $ w_{n_i}\in \{\upsilon, \alpha_1, \ldots, \alpha_{k_{n_i}} \}$ if $\;\;\;$\\
 $\;\;\;\;\;\;\;\;\;\;\;\;\;\;\;\;\;\;n_i> 0$, $ w_{n_i}\in \{\upsilon,\alpha_{-k_{n_i}}, \ldots,\alpha_{-1}\}$ if $n_i< 0$ for all $1\leq i \leq l $ and there\\ exists $1\leq i \leq l$  with $w_{n_i}=\upsilon \}.\;\;\;\;\;\;\;\;\;\;\;\;\;\;\;\;\;\; \;\;\;\;\;\;\;\;\;\;\;\;\;\;\;\;\;\; \;\;\;\;$\end{center}
The set of {\bf{variable  $\omega$-located words}} over $\Sigma=\{\alpha_n: \;n\in \nat\}$ dominated by the increasing sequence $\vec{k}=(k_n)_{n\in \nat}\subseteq \nat$ is:
\begin{center}$L(\Sigma, \vec{k} ; \upsilon) = \{w=w_{n_1}\ldots w_{n_l} : l\in\nat,\; n_1<\ldots<n_l\in \nat,$ $ w_{n_i}\in \{\upsilon, \alpha_1, \ldots, \alpha_{k_{n_i}} \}\;\;\;\;\;\;\;\;\;$ \\ $\;$ for all $1\leq i \leq l $ and there exists $1\leq i \leq l$  with $w_{n_i}=\upsilon \}\;\;\;$\end{center}
We set $\widetilde{L}(\Sigma\cup\{\upsilon\}, \vec{k})=\widetilde{L}(\Sigma, \vec{k})\cup \widetilde{L}(\Sigma, \vec{k} ; \upsilon)$ and $L(\Sigma\cup\{\upsilon\}, \vec{k})=L(\Sigma, \vec{k})\cup L(\Sigma, \vec{k} ; \upsilon)$.

For $w=w_{n_1}\ldots w_{n_l}\in \widetilde{L}(\Sigma\cup\{\upsilon\}, \vec{k})$ the set $dom(w)=\{n_1,\ldots ,n_l\}$ is the \textit{domain} of $w$. Let $dom^-(w)=\{n\in dom(w):\;n<0\}$ and $dom^+(w)=\{n\in dom(w):\;n>0\}.$
\newline
We define the set
\begin{center}
$\;\;\widetilde{L}_{0}(\Sigma,\vec{k};\upsilon)=\{w\in \widetilde{L}(\Sigma,\vec{k};\upsilon): w_{i_1}=\upsilon=w_{i_2}$ for some $i_1\in dom^-(w),i_2\in dom^+(w)\}$.
\end{center}
For $w=w_{n_1}\ldots w_{n_r},  u=u_{m_1}\ldots u_{m_l}\in \widetilde{L}(\Sigma \cup\{\upsilon\}, \vec{k})$ with $dom(w)\cap dom(u)=\emptyset$ we  define the \textbf{concatenating word}:
\begin{center}
$w \star u =z_{q_1}\ldots z_{q_{r+l}}\in \widetilde{L}(\Sigma \cup\{\upsilon\}, \vec{k}),$
\end{center}
where $\{q_1<\ldots<q_{r+l}\}=dom(w)\cup dom(u),\;z_i=w_i$ if $i \in dom(w)$ and $z_i=u_i$ if $i \in dom(u)$.

The set $\widetilde{L}(\Sigma\cup\{\upsilon\}, \vec{k})$ can be endowed with \textbf{the relations} $<_{\textsl{R}_1},$ $<_{\textsl{R}_2}:$
\begin{center}
$w <_{\textsl{R}_1} u \Longleftrightarrow \;dom(u)=A_1\cup A_2$ with $A_1,A_2\neq \emptyset$ such that \\ $\;\;\;\;\;\;\;\;\;\;\;\;\;\;\;\;\;\;\;\;\;\;\;\; \;\;\;\;\max A_1<\min dom(w)\leq \max dom(w)<\min A_2,$
\end{center}
\begin{center}
$w <_{\textsl{R}_2} u \Longleftrightarrow \;\max dom(w)< \min dom(u). \;\;\;\;\;\;\;\;\;\;\;\;\;\;\;\;\;\;\;\;\;\;\;\;\;\;\;$
\end{center}
We define the sets

 $\widetilde{L}^\infty (\Sigma, \vec{k} ; \upsilon) = \{\vec{w} = (w_n)_{n\in\nat} : w_n\in \widetilde{L}_0(\Sigma, \vec{k} ; \upsilon)$
and $w_n<_{\textsl{R}_1}w_{n+1}$ for every $ n\in\nat\},$

$L^\infty (\Sigma, \vec{k} ; \upsilon) = \{\vec{w} = (w_n)_{n\in\nat} : w_n\in L(\Sigma, \vec{k} ; \upsilon)$
and  $w_n<_{\textsl{R}_2}w_{n+1}$ for every $ n\in\nat\}.$

We will define now the notion of \textbf{substitution} for the variable  $\omega$-$\mathbb{Z}^\ast$-located words and respectively for the variable  $\omega$-located words.

Let $w=w_{n_1}\ldots w_{n_l}\in \widetilde{L}_0(\Sigma, \vec{k} ; \upsilon)$ with $n_w=\min dom^+(w)$ and $-m_w=\max dom^-(w)$ for $n_w, m_w\in\nat$. For every $(p,q)\in \{1,\ldots,k_{n_w}\}\times \{1,\ldots,k_{-m_w}\}\cup\{(\upsilon,\upsilon)\}$ we set:
\begin{center}
$w(\upsilon,\upsilon)=w$ and $w(p,q)=u_{n_1}\ldots u_{n_l}$,
\end{center}
for every $(p,q)\in \{1,\ldots,k_{n_w}\}\times \{1,\ldots,k_{-m_w}\}$, where, for $1\leq i\leq l$, $u_{n_i}=w_{n_i}$ if $w_{n_i}\in \Sigma$, $u_{n_i}=\alpha_p$ if $w_{n_i}=\upsilon,$ $n_i> 0$ and $u_{n_i}=\alpha_{-q}$ if $w_{n_i}=\upsilon,$ $n_i< 0$.

Respectively, Let $w=w_{n_1}\ldots w_{n_l}\in L(\Sigma, \vec{k} ; \upsilon)$ with $n_w=\min dom(w)\in\nat$. For every $p\in \{1,\ldots,k_{n_w}\}\cup\{\upsilon\}$ we set:
\begin{center}
$w(\upsilon)=w$ and $w(p)=u_{n_1}\ldots u_{n_l}$,
\end{center}
for every $p\in \{1,\ldots,k_{n_w}\}$, where, for $1\leq i\leq l$, $u_{n_i}=w_{n_i}$ if $w_{n_i}\in \Sigma$, $u_{n_i}=\alpha_p$ if $w_{n_i}=\upsilon$.
\newline
We remark that for $\vec{w} =(w_n)_{n\in\nat}\in \widetilde{L}^\infty (\Sigma, \vec{k} ; \upsilon)$ (resp. for $\vec{w} =(w_n)_{n\in\nat}\in L^\infty (\Sigma, \vec{k} ; \upsilon)$) we have $n\leq\min dom^+(w_n)$ and $-n\geq\max dom^-(w_n)$ (resp. $n\leq\min dom(w_n)$), for $n\in \nat$. So, for $n\in \nat$, the substituted word $w_n(p,q)$ (resp. $w_n(p)$ ) has meaning for every  $(p,q)\in\nat\times \nat$ with $p\leq k_n$ and $q\leq k_{-n}$ (resp. for every $p\in\nat$ with $p\leq k_n$ ).

Fix a sequence $\vec{w} = (w_n)_{n\in\nat}\in \widetilde{L}^\infty (\Sigma, \vec{k} ; \upsilon)$ (resp. $\vec{w} = (w_n)_{n\in\nat}\in L^\infty (\Sigma, \vec{k} ; \upsilon)$).

An\textbf{ extracted $\omega$-$\mathbb{Z}^\ast$-located word} (resp. \textbf{extracted $\omega$-located word}) of $\vec{w}$ is an $\omega$-$\mathbb{Z}^\ast$-located word $z\in \widetilde{L}(\Sigma, \vec{k})$ (resp. $z\in L(\Sigma, \vec{k})$) with
\begin{center}
$z=w_{n_1}(p_1,q_1)\star \ldots \star w_{n_\lambda}(p_\lambda,q_\lambda)$ (resp. $z=w_{n_1}(p_1)\star \ldots \star w_{n_\lambda}(p_\lambda)$),
\end{center}
where $\lambda\in\nat$, $n_1<\ldots<n_\lambda\in\nat$ and $(p_i,q_i) \in \{1,\ldots,k_{n_i}\}\times \{1,\ldots,k_{-n_i}\} $ (resp. $p_i \in \{1,\ldots,k_{n_i}\}$) for every $1\leq i \leq \lambda$.
The set of all the extracted $\omega$-$\mathbb{Z}^\ast$-located words of $\vec{w}$ is denoted by $\widetilde{E}(\vec{w})$ (resp. all the extracted $\omega$-located words of $\vec{w}$ is denoted by $E(\vec{w})$).

An \textbf{extracted variable $\omega$-$\mathbb{Z}^\ast$-located word} (resp. \textbf{extracted variable $\omega$-located word}) of $\vec{w}$ is a variable $\omega$-$\mathbb{Z}^\ast$-located word $u\in \widetilde{L}_0(\Sigma, \vec{k} ; \upsilon)$ (resp. $u\in L(\Sigma, \vec{k} ; \upsilon)$) with
\begin{center}
$u=w_{n_1}(p_1,q_1)\star \ldots \star w_{n_\lambda}(p_\lambda,q_\lambda)$ (resp. $u=w_{n_1}(p_1)\star \ldots \star w_{n_\lambda}(p_\lambda)$),
\end{center}
where $\lambda\in\nat$, $n_1<\ldots<n_\lambda\in\nat$, $(p_i,q_i) \in \{1,\ldots,k_{n_i}\}\times \{1,\ldots,k_{-n_i}\}\cup\{(\upsilon,\upsilon)\} $  for every $1\leq i \leq \lambda$ and $(\upsilon,\upsilon) \in \{ (p_1,q_1),\ldots,(p_\lambda,q_\lambda)\}$ (resp. $p_i\in \{1,\ldots,k_{n_i}\}\cup\{\upsilon\} $ for every $1\leq i \leq \lambda$ and $\upsilon \in \{ p_1,\ldots,p_\lambda\}$  ).
The set of all the extracted variable $\omega$-$\mathbb{Z}^\ast$-located words of $\vec{w}$ is denoted by $\widetilde{EV}(\vec{w})$ (resp. the set of all the extracted variable $\omega$-located words of $\vec{w}$ is denoted by $EV(\vec{w})$). Let
\begin{center}
$\widetilde{EV}^{\infty}(\vec{w}) = \{\vec{u}=(u_n)_{n\in\nat} \in \widetilde{L}^\infty (\Sigma, \vec{k} ; \upsilon) : u_n\in \widetilde{EV}(\vec{w})$ for every $n\in\nat \},$
\end{center}
\begin{center}
 $EV^{\infty}(\vec{w}) = \{\vec{u}=(u_n)_{n\in\nat} \in L^\infty (\Sigma, \vec{k} ; \upsilon) : u_n\in EV(\vec{w})$ for every $n\in\nat \}.$
\end{center}
If $\vec{u} \in \widetilde{EV}^{\infty}(\vec{w})$ (resp. $\vec{u} \in EV^{\infty}(\vec{w})$), then we say that $\vec{u}$ is an \textbf{extraction }of $\vec{w}$ and we write $\vec{u} \prec \vec{w}$. Notice that for $\vec{u},\vec{w}\in \widetilde{L}^{\infty} (\Sigma, \vec{k} ; \upsilon)$ (resp. $\vec{u},\vec{w}\in L^{\infty} (\Sigma, \vec{k} ; \upsilon)$) we have $\vec{u} \prec \vec{w}$ if and only if $\widetilde{EV}(\vec{u})\subseteq \widetilde{EV}(\vec{w})$ (resp. $EV(\vec{u})\subseteq EV(\vec{w})$).

\medskip

Using the theory of ultrafilters we proved in \cite{F}, \cite{FK} the following partition theorem for $\omega$-$\mathbb{Z}^\ast$-located words and for $\omega$-located words.

\begin{thm}
\label{2}(\cite{F}, \cite{FK})
Let $\Sigma=\{\alpha_n \;:\;n \in \mathbb{Z}^{\ast}\}$ be an alphabet,  $\vec{k}=(k_n)_{n\in\mathbb{Z}^\ast}\subseteq \nat$ such that
$(k_n)_{n\in\nat}$, $(k_{-n})_{n\in\nat}$ are increasing sequences, $\upsilon \notin \Sigma$ and let $\vec{w}=(w_n)_{n\in \nat}\in \widetilde{L}^{\infty} (\Sigma, \vec{k} ; \upsilon)$ (resp. $\vec{w}=(w_n)_{n\in \nat}\in L^{\infty} (\Sigma, \vec{k} ; \upsilon)$). If $\widetilde{L}(\Sigma, \vec{k})=C_1\cup\ldots\cup C_s$ (resp. $L(\Sigma, \vec{k})=C_1\cup\ldots\cup C_s$), $s\in \nat,$ then there exists $\vec{u}\prec \vec{w}$ and $1\leq j_0 \leq s$ such that
\begin{center}
 $\widetilde{E}(\vec{u})\subseteq C_{j_0}$ (resp. $E(\vec{u})\subseteq C_{j_0}$).
\end{center}
\end{thm}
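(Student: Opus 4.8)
The plan is to prove Theorem~\ref{2} by the method of idempotent ultrafilters in a compact right-topological semigroup, the Galvin--Glazer--Hindman technique in the form adapted by Carlson and by Furstenberg--Katznelson to partition theorems for words. I would carry out the $\omega$-$\mathbb{Z}^\ast$-located case in detail and read off the $\omega$-located case as its one-sided specialization.

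First I would organize the extracted words of $\vec w$ into a semigroup. The set $\widetilde{EV}(\vec w)$ of extracted variable words carries the partial operation $\star$, defined whenever the domains are disjoint, and this operation is associative on triples whose domains are separated in the sense of $<_{\textsl{R}_1}$ and $<_{\textsl{R}_2}$; thus $(\widetilde{EV}(\vec w),\star)$ is a partial semigroup. For each $n$ let $\widetilde{EV}_n(\vec w)$ be the extracted variable words built only from the blocks $w_m$ with $m\ge n$, and form
$$\gamma=\bigcap_{n\in\nat}\overline{\widetilde{EV}_n(\vec w)}\subseteq\beta\widetilde{EV}(\vec w),$$
the set of ultrafilters concentrating on the cofinal pieces. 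Extending $\star$ to $\beta\widetilde{EV}(\vec w)$ first in the right and then in the left variable makes $\gamma$ a nonempty compact right-topological subsemigroup, so by the Ellis--Namakura theorem it contains an idempotent $\U=\U\star\U$.

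The decisive point is to couple $\U$ with the block-wise substitution, since an extracted constant word $u_{n_1}(p_1,q_1)\star\cdots\star u_{n_\lambda}(p_\lambda,q_\lambda)$ instantiates each chosen block by its own admissible pair $(p_i,q_i)$. Each substitution $u\mapsto u(p,q)$ is a homomorphism for $\star$ and extends to a continuous map $\beta\widetilde{EV}(\vec w)\to\beta\widetilde{L}(\Sigma,\vec k)$ carrying $\U$ to an idempotent; since at each stage only finitely many admissible pairs occur, the partition $\widetilde{L}(\Sigma,\vec k)=C_1\cup\cdots\cup C_s$ singles out, for each pair, one cell of the partition. I would then select $\U$ inside the part of $\gamma$ on which all these substitution homomorphisms agree, so that a single cell $C_{j_0}$ is named by every instantiation and the set
$$B=\{u\in\widetilde{EV}(\vec w): u(p,q)\in C_{j_0}\text{ for every admissible }(p,q)\}$$
belongs to $\U$. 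It is exactly here that the defining condition of $\widetilde{L}_0$ --- that the variable occurs on both the positive and the negative side --- together with the ordering $<_{\textsl{R}_1}$, lets the positive parameter $p$ and the negative parameter $q$ be treated independently, so that the two-parameter family of instantiations is governed by one idempotent rather than by an iterated tower of them.

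Finally I would construct the extraction $\vec u=(u_n)_{n\in\nat}\prec\vec w$ recursively by the standard $A^\star$-argument: put $B^\star=\{x\in B:x^{-1}B\in\U\}\in\U$, choose $u_1\in B^\star$ with domain pushed far enough to the right using $\gamma$, and at each later step choose $u_{n+1}\in B^\star$ with $u_n<_{\textsl{R}_1}u_{n+1}$ and lying in the finitely many sets $x^{-1}B$ forced by the partial concatenations of the words already chosen. Idempotency keeps every such intersection in $\U$, hence nonempty, and guarantees that each extracted constant word $u_{n_1}(p_1,q_1)\star\cdots\star u_{n_\lambda}(p_\lambda,q_\lambda)$ lands in $C_{j_0}$, which is the conclusion $\widetilde{E}(\vec u)\subseteq C_{j_0}$. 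The step I expect to be the genuine obstacle is the third one: forcing a single idempotent ultrafilter to name the same cell $C_{j_0}$ under every admissible instantiation $(p,q)$ while respecting the partiality of $\star$ and the two-sided domain structure. This is where the present setting properly exceeds Carlson's finite-alphabet theorem, and it is handled by the careful interplay, recorded through $<_{\textsl{R}_1}$ and $<_{\textsl{R}_2}$, between concatenation, substitution, and the algebraic structure of the minimal ideal of $\gamma$.
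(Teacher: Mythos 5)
First, note that this paper does not prove Theorem~\ref{2} at all: it is imported from \cite{F} and \cite{FK}, with only the remark that it was established there ``using the theory of ultrafilters''. So there is no in-paper proof to measure you against; your sketch is of the right genre (Galvin--Glazer--Hindman idempotents in a compact right-topological semigroup of ultrafilters on the partial semigroup of extracted variable words, in the style of Carlson and Furstenberg--Katznelson), which is consistent with the method the authors attribute to their own proof.

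That said, as a proof your proposal has a genuine gap, and it sits exactly where you predict. The entire content of the theorem beyond Hindman and Carlson is the existence of an idempotent $\U\in\gamma$ and a single $j_0$ such that the set $B=\{u: u(p,q)\in C_{j_0}\ \text{for every admissible}\ (p,q)\}$ lies in $\U$. You write that you ``would select $\U$ inside the part of $\gamma$ on which all these substitution homomorphisms agree'', but you give no argument that this part is nonempty, and it is not a routine compactness fact: the admissible pairs for a word $u$ range over $\{1,\dots,k_{n_u}\}\times\{1,\dots,k_{-m_u}\}$, and since $(k_n)$ is unbounded there are, globally, infinitely many substitution maps $T_{(p,q)}$, each only partially defined (on words whose inner domain boundary lies far enough out). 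Forcing all of their images of $\U$ to concentrate on one cell is precisely the key lemma of \cite{F}, \cite{FK} --- an idempotent $\W$ on constant words and an idempotent $\U$ on variable words with $T_{(p,q)}(\U)=\W$ for every $(p,q)$, obtained via the partial order on idempotents and the minimal ideal --- and that lemma is exactly what your outline assumes rather than proves. A second, smaller issue: an extracted constant word substitutes a different pair $(p_i,q_i)$ into each block, so membership of a concatenation $u_1\star u_2$ in your set $B$ does not control the mixed instantiations $u_1(p_1,q_1)\star u_2(p_2,q_2)$; the closing $A^\star$-recursion must be run against the translates $\bigl(u_{n_1}(p_1,q_1)\star\cdots\star u_{n_i}(p_i,q_i)\bigr)^{-1}C_{j_0}^\star$ over all admissible choices (finitely many at each stage, since the chosen words have fixed inner boundaries), not against $x^{-1}B$ for $x\in B$. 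In short, the architecture matches the cited source, but the central step is asserted, not established.
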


\section{Implications of the partition theorem to topological dynamics  }

We will prove a topological formulation (in Theorem~\ref{3}) of the partition Theorem~\ref{2}, important for proving later (multiple) recurrence results for  systems of continuous maps from a compact metric space into itself indexed by $\omega$-$\mathbb{Z}^\ast$-located words (Theorem~\ref{5}), which extend fundamental recurrence results of Birkhoff (\cite{Bi}) and Furstenberg-Weiss (\cite{Fu}, \cite{FuW}).

Let an alphabet $\Sigma=\{\alpha_n: \;n\in \zat^\ast\}$ and   $\vec{k}=(k_n)_{n\in \zat^\ast}\subseteq \nat,$ where
$(k_n)_{n\in\nat}$, $(k_{-n})_{n\in\nat}$ are increasing sequences. Observe that $\widetilde{L}(\Sigma, \vec{k}) $ can be considered as a directed set with partial order either $ \textsl{R}_1$ or $ \textsl{R}_2$. So, in a topological space $X$, we can consider  $\{x_w\}_{w\in \widetilde{L} (\Sigma, \vec{k})}\subseteq X$ either as an $ \textsl{R}_1$-net or as an $ \textsl{R}_2$-net in $X$. Consequently,
$\{x_w\}_{w\in L (\Sigma, \vec{k})}$ is an $ \textsl{R}_2$-subnet of $\{x_w\}_{w\in \widetilde{L} (\Sigma, \vec{k})}$. Moreover,  $\{x_w\}_{w\in \widetilde{E}(\vec{u})}$ for $\vec{u}\in \widetilde{L}^{\infty} (\Sigma, \vec{k};\upsilon)$ is an $ \textsl{R}_1$-subnet of $\{x_w\}_{w\in \widetilde{L} (\Sigma, \vec{k})}$ and respectively $\{x_w\}_{w\in E(\vec{u})}$ for $\vec{u}\in L^{\infty} (\Sigma, \vec{k};\upsilon)$ is an $ \textsl{R}_2$-subnet of $\{x_w\}_{w\in L (\Sigma, \vec{k})}$.
\newline
Let $x_0\in X.$ We write
$$\text{$R_1$-}\lim_{w\in \widetilde{L} (\Sigma, \vec{k})}x_w=x_0$$
if $\{x_w\}_{w\in \widetilde{L} (\Sigma, \vec{k})}$ converges to $x_0$ as $ \textsl{R}_1$-net in $X$, i.e. if for any neighborhood $V$ of $x_0,$ there exists $n_0\equiv n_0(V)\in \nat$ such that $x_w\in V$ for every $w$ with $\min\{-\max dom^-(w),$ $\min dom^+(w)\}\geq n_0$. Analogously, we write
$$\text{$R_2$-}\lim_{w\in L(\Sigma, \vec{k})}x_w=x_0$$
if for any neighborhood $V$ of $x_0,$ there exists $n_0\equiv n_0(V)\in \nat$ such that $x_w\in V$ for every $w$ with $\min dom(w)\geq n_0$.

We will give now a topological reformulation of Theorem~\ref{2}.

\begin{thm}\label{3}
Let $(X,d)$ be a compact metric space, $\Sigma=\{\alpha_n \;:\;n \in \mathbb{Z}^{\ast}\}$ be an alphabet,  $\vec{k}=(k_n)_{n\in\mathbb{Z}^\ast}\subseteq \nat$ such that
$(k_n)_{n\in\nat}$, $(k_{-n})_{n\in\nat}$ are increasing sequences, $\upsilon \notin \Sigma$ and $\vec{w}=(w_n)_{n\in \nat}\in \widetilde{L}^{\infty} (\Sigma, \vec{k} ; \upsilon)$ (resp. $\vec{w}=(w_n)_{n\in \nat}\in L^{\infty} (\Sigma, \vec{k} ; \upsilon)$). For every net $\{x_ w\}_{ w\in\widetilde{L} (\Sigma, \vec{k})}\subseteq X$ (resp. $\{x_ w\}_{ w\in L(\Sigma, \vec{k})}\subseteq X$),   there exist an extraction $\vec{u}\prec\vec{w}$ of $\vec{w}$ and $x_0\in X$ such that $$\text{$R_1$-}\lim_{w\in \widetilde{E}(\vec{u})}x_w=x_0\;\;(\text{resp.\;\;$R_2$-}\lim_{w\in E(\vec{u})}x_w=x_0).$$
\end{thm}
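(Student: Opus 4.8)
The plan is to deduce Theorem~\ref{3} from the partition Theorem~\ref{2} by a compactness argument combined with a diagonal extraction; I describe the $\textsl{R}_1$-case for $\widetilde{L}(\Sigma,\vec{k})$, the $\textsl{R}_2$-case for $L(\Sigma,\vec{k})$ being identical after replacing $<_{\textsl{R}_1}$ by $<_{\textsl{R}_2}$ and $\min\{-\max dom^-(w),\min dom^+(w)\}$ by $\min dom(w)$. First I would use compactness: for each $m\in\nat$ cover $X$ by finitely many balls $B(y^{(m)}_1,1/m),\ldots,B(y^{(m)}_{s_m},1/m)$ and colour each $w\in\widetilde{L}(\Sigma,\vec{k})$ by the least $j$ with $d(x_w,y^{(m)}_j)<1/m$, obtaining a finite partition $\widetilde{L}(\Sigma,\vec{k})=C^{(m)}_1\cup\cdots\cup C^{(m)}_{s_m}$. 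Feeding these partitions into Theorem~\ref{2} one after another, I would build a $\prec$-decreasing chain $\vec{w}=\vec{w}^{(0)}\succ\vec{w}^{(1)}\succ\cdots$ and colours $j_m$ with $\widetilde{E}(\vec{w}^{(m)})\subseteq C^{(m)}_{j_m}$; writing $y^{(m)}:=y^{(m)}_{j_m}$ this says $x_w\in B(y^{(m)},1/m)$ for every $w\in\widetilde{E}(\vec{w}^{(m)})$. Since $\widetilde{E}(\vec{w}^{(m+1)})\subseteq\widetilde{E}(\vec{w}^{(m)})$ and both are non-empty, picking a common word gives $d(y^{(m)},y^{(m+1)})<1/m+1/(m+1)$, so $(y^{(m)})_m$ is Cauchy and converges, by completeness of the compact space $X$, to the point $x_0$ that will serve as the limit.

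Next I would extract diagonally: set $u_n:=w^{(n)}_{a_n}$, a single term of $\vec{w}^{(n)}$, choosing $a_n$ large enough that $dom(u_n)$ lies outside $[\min dom(u_{n-1}),\max dom(u_{n-1})]$ on both sides. This is automatic once $a_n>\max\{|j|:j\in dom(u_{n-1})\}$, because the $\widetilde{L}^{\infty}$-condition forces $dom(w^{(n)}_{a_n})\subseteq(-\infty,-a_n]\cup[a_n,\infty)$ with non-empty negative and positive parts; hence $u_{n-1}<_{\textsl{R}_1}u_n$. By transitivity of $\prec$ each $u_n\in\widetilde{EV}(\vec{w}^{(n)})\subseteq\widetilde{EV}(\vec{w})$, and $\vec{u}=(u_n)_n\in\widetilde{L}^{\infty}(\Sigma,\vec{k};\upsilon)$, so $\vec{u}\prec\vec{w}$.

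To verify convergence I would exploit one structural fact: for $w=u_{n_1}(p_1,q_1)\star\cdots\star u_{n_\lambda}(p_\lambda,q_\lambda)\in\widetilde{E}(\vec{u})$ with $n_1<\cdots<n_\lambda$, the $<_{\textsl{R}_1}$-nesting of the $u_{n_i}$ makes $dom(u_{n_1})$ the innermost block, so $\min dom^+(w)=\min dom^+(u_{n_1})$ and $\max dom^-(w)=\max dom^-(u_{n_1})$; thus the quantity $\min\{-\max dom^-(w),\min dom^+(w)\}$ governing $\textsl{R}_1$-convergence is inherited from $u_{n_1}$. Given a neighbourhood $V$ of $x_0$, fix $m$ with $d(y^{(m)},x_0)<1/m$ and $B(x_0,2/m)\subseteq V$, and set $n_0:=1+\max\{\min\{-\max dom^-(u_n),\min dom^+(u_n)\}:1\le n<m\}$. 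If $w\in\widetilde{E}(\vec{u})$ has $\min\{-\max dom^-(w),\min dom^+(w)\}\ge n_0$, this inheritance forces its innermost index $n_1\ge m$, so every constituent $u_{n_i}\in\widetilde{EV}(\vec{w}^{(n_i)})\subseteq\widetilde{EV}(\vec{w}^{(m)})$ and therefore $w\in\widetilde{E}(\vec{w}^{(m)})$; consequently $d(x_w,y^{(m)})<1/m$ and $x_w\in B(x_0,2/m)\subseteq V$, which is exactly the relation $R_1$-$\lim_{w\in\widetilde{E}(\vec{u})}x_w=x_0$.

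The hard part will be the two bookkeeping claims underlying the final step: that the $\textsl{R}_1$-size $\min\{-\max dom^-(w),\min dom^+(w)\}$ of a composite extracted word is inherited from its innermost constituent, and that an extracted word assembled from pieces lying in $\widetilde{EV}(\vec{w}^{(m)})$ is itself an extracted word of $\vec{w}^{(m)}$ (transitivity of extraction, together with the compatibility of the substitution and concatenation operations). Both amount to careful tracking of domains under $\star$ and under the substitutions $w(p,q)$; once they are in place, the compactness and the diagonalization are routine.
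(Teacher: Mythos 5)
Your proposal is correct and follows essentially the same route as the paper: iterated application of Theorem~\ref{2} to finite covers of mesh tending to $0$, a diagonal extraction taking one term from each successive refinement, and the observation that an extracted word of the diagonal with large $\min\{-\max dom^-(w),\min dom^+(w)\}$ already lies in $\widetilde{E}$ of the $m$-th refinement (the paper glosses over the same two bookkeeping facts you flag). The only substantive difference is that the paper obtains $x_0$ as the intersection of nested closed balls $\widehat{B}(x_{i_n}^n,1/2^n)$ rather than as the limit of a Cauchy sequence of centres; on that point, note that your consecutive bounds $d(y^{(m)},y^{(m+1)})<1/m+1/(m+1)$ do not by themselves give Cauchyness (they are not summable), but the same comparison applied to arbitrary $m<m'$ via $\widetilde{E}(\vec{w}^{(m')})\subseteq\widetilde{E}(\vec{w}^{(m)})$ yields $d(y^{(m)},y^{(m')})<2/m$ and repairs this immediately.
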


\begin{proof} For $x\in X$ and $\epsilon>0$ we set $\widehat{B}(x,\epsilon)=\{y\in X : d(x,y)\leq \epsilon\}$.
Since $(X,d)$ is a compact metric space, we have that $X=\bigcup_{i=1}^{m_1}\widehat{B}(x_i^1,\frac{1}{2})$ for some $x_1^1,\ldots,x^1_{m_1}\in X$. According to Theorem~\ref{2}, there exists $\vec{u}_1\prec\vec{w}$ and $1\leq i_1\leq m_1$ such that
$\{x_w\}_{w\in \widetilde{E}(\vec{u}_1)}\subseteq \widehat{B}(x_{i_1}^1,\frac{1}{2})$ (resp. $\{x_w\}_{w\in E(\vec{u}_1)}\subseteq \widehat{B}(x_{i_1}^1,\frac{1}{2})$ ). Analogously, since $\widehat{B}(x_{i_1}^1,\frac{1}{2})$ is compact,  there exist $x_1^2,\ldots,x^2_{m_2}\in X,$ such that $\widehat{B}(x_{i_1}^1,\frac{1}{2})\subseteq \bigcup_{i=1}^{m_2}\widehat{B}(x_i^2,\frac{1}{4}),$ and consequently there exist $\vec{u}_2\prec\vec{u}_1$ and $1\leq i_2\leq m_2$ such that $\{x_w\}_{w\in \widetilde{E}(\vec{u}_2)}\subseteq\widehat{B}(x_{i_1}^1,\frac{1}{2})\cap\widehat{B}(x_{i_2}^2,\frac{1}{4})$. Inductively, we construct $(\vec{u}_n)_{n\in \nat}\subseteq \widetilde{L}^\infty (\Sigma, \vec{k} ; \upsilon)$ (resp. $(\vec{u}_n)_{n\in \nat}\subseteq L^\infty (\Sigma, \vec{k} ; \upsilon)$) such that $\vec{u}_{n+1}\prec\vec{u}_n\prec \vec{w}$ for every $n\in \nat$ and
closed balls $\widehat{B}(x_{i_{n}}^{n},\frac{1}{2^{n}}),$ for $n\in \nat$ such that for every $n\in \nat$
\begin{center}
$\{x_w\}_{w\in \widetilde{E}(\vec{u}_n)}\subseteq  \bigcap_{j=1}^n\widehat{B}(x_{i_j}^j,\frac{1}{2^j})$ (resp. $\{x_w\}_{w\in E(\vec{u}_n)}\subseteq  \bigcap_{j=1}^n\widehat{B}(x_{i_j}^j,\frac{1}{2^j}))$.
\end{center}
If  $\vec{u}_n=(w^{(n)}_k)_{k\in \nat}$ for every $n\in \nat,$ then we set $\vec{u}=(w_n^{(n)})_{n\in \nat}.$ Of course $\vec{u}\prec\vec{w}$.
Let $\{x_0\}=\bigcap_{n\in \nat}\widehat{B}(x_{i_{n}}^{n},\frac{1}{2^{n}}).$
Then $R_1$-$\lim_{ w\in \widetilde{E}(\vec{u})}x_w=x_0$ (resp. $R_2$-$\lim_{ w\in E(\vec{u})}x_w=x_0$).  Indeed, for $\varepsilon>0$ pick $k_0\in \nat$ such that $1/2^{k_0}<\varepsilon.$ Then, for every $ w\in \widetilde{E}(\vec{u}_{k_0})$ we have that $d(x_{w},x_0)\leq 1/2^{k_0}<\varepsilon.$ Since $\widetilde{E}(\vec{u}_n)\subseteq \widetilde{E}(\vec{u}_{k_0})$ for every $n\geq k_0,$ we have that $\widetilde{E}((w_n^{(n)})_{n\geq k_0})\subseteq \widetilde{E}(\vec{u}_{k_0})$ and consequently that $\{w\in \widetilde{E}(\vec{u}):\min\{-\max dom^-(w),$ $\min dom^+(w)\}\geq n_0\}\subseteq \widetilde{E}(\vec{u}_{k_0})$ for  $n_0=\max\{-\min dom^-(w_{k_0}^{(k_0)}),\max dom^+(w_{k_0}^{(k_0)})\}$.
\end{proof}

\begin{remark}\label{rem2}
 (1)  Note that Theorem~\ref{3} follows from Theorem~\ref{2}. But conversely Theorem~\ref{2} follows from Theorem~\ref{3}. In fact, one only needs the assertion for finite spaces. Indeed, let $\widetilde{L}(\Sigma, \vec{k})=C_1\cup\ldots\cup C_s$ (resp. $L(\Sigma, \vec{k})=C_1\cup\ldots\cup C_s$), $s\in \nat.$ Then defining, for every $w\in \widetilde{L}(\Sigma, \vec{k})$ (resp. for $w\in L(\Sigma, \vec{k})$), $x_w =i$ if and only if $w\in C_i$ and $w\notin C_j$ for all $j<i$, we have, according to Theorem~\ref{3}, that there exist $\vec{u}=(u_n)_{n\in\nat}\prec\vec{w}$ and $1\leq j_0\leq s$ such that $\text{$R_1$-}\lim_{w\in \widetilde{E}(\vec{u})}x_w=j_0$ (resp. $\text{ $R_2$-}\lim_{w\in E(\vec{u})}x_w=j_0).$ For $n_0$ large enough and $\vec{u}_0=(u_{n+n_0})_{n\in\nat}$ we have that $\widetilde{E}(\vec{u}_0)\subseteq C_{j_0}$ (resp. $E(\vec{u}_0)\subseteq C_{j_0}$).

 (2) Observe that if $\text{$R_1$-}\lim_{w\in \widetilde{E}(\vec{u})}x_w=x_0$ for $\vec{u}=(u_n)_{n\in \nat}\in \widetilde{L}^{\infty} (\Sigma, \vec{k} ; \upsilon)$, then the sequences $(x_{u_n (p_n,q_n)})_{n\in\nat}$ converge uniformly to $x_0$ for all the sequences $((p_n,q_n))_{n\in\nat}\subseteq \nat\times \nat$ with $1\leq p_n\leq k_{n}$, $1\leq q_n \leq k_{-n}$. Analogously, if $\text{$R_2$-}\lim_{w\in E(\vec{u})}x_w=x_0$ for $\vec{u}=(u_n)_{n\in \nat}\in L^{\infty} (\Sigma, \vec{k} ; \upsilon)$, then the sequences $(x_{u_n (p_n)})_{n\in\nat}$ converge uniformly to $x_0$ for all the sequences $(p_n)_{n\in\nat}\subseteq \nat$ with $1\leq p_n\leq k_{n}$.

 (3) The particular case of Theorem~\ref{2} for words in $L(\Sigma, \vec{k}),$ where $\Sigma$ is a finite alphabet, gives Carlson's partition theorem in \cite{C}, whose topological reformulation has been given by Furstenberg and Katznelson in \cite{FuKa}.

 (4) The particular case of Theorem~\ref{2} for words in $L(\Sigma, \vec{k})$ where $\Sigma$ is a singleton and $\vec{k}=(k_n)_{n\in\nat}$ with $k_n =1$ for all $n\in\nat$ (so, the words can be coincide with its domain) is Hindman's partition theorem in \cite{H}. Furstenberg and Weiss in \cite{FuW}
gave the topological reformulation of Hindman's theorem introducing the $IP$-convergence of a net $\{x_F\}_{F\in [\nat]^{<\omega}_{>0}}$ in a topological space $X$ to $x_0\in X$, i.e. if for any neighborhood $V$ of $x_0,$ there exists $n_0\equiv n_0(V)\in \nat$ such that $x_F\in V$ for every $F\in [\nat]^{<\omega}_{>0}$ with $\min F\geq n_0$. In this case we write IP-$\lim_{F\in [\nat]^{<\omega}_{>0}}x_F=x_0$. Also, using the $IP$-convergence, they proved important results in topological dynamics (see \cite{Fu}).
\end{remark}

In the following proposition we will characterize the $R_1$-convergence of nets $\{x_w\}_{w\in \widetilde{L} (\Sigma, \vec{k})}$ and the $R_2$-convergence of nets $\{x_w\}_{w\in L(\Sigma, \vec{k})}$ as uniform $IP$-convergence, pointing
out the way for strengthening results involving the $IP$-convergence.

\begin{prop}\label{4}
Let $X$ be a topological space, $\vec{w}=(w_n)_{n\in \nat}\in  \widetilde{L}^{\infty}(\Sigma,\vec{k};\upsilon)$ $($resp. $\vec{w}=(w_n)_{n\in \nat}\in  L^{\infty}(\Sigma,\vec{k};\upsilon))$ and $\{x_ w\}_{ w\in\widetilde{L} (\Sigma, \vec{k})}\subseteq X$ (resp. $\{x_ w\}_{ w\in L(\Sigma, \vec{k})}\subseteq X$). For a sequence $((p_n,q_n))_{n\in\nat}\subseteq \nat\times \nat$ with $1\leq p_n\leq k_{n}$, $1\leq q_n \leq k_{-n}$ and $F=\{n_1<\ldots<n_\lambda\}\in [\nat]^{<\omega}_{>0}$ a finite non-empty subset of $\nat$ we set $y^{((p_n,q_n))_{n\in\nat}}_F = x_{w_{n_1}(p_{n_1},q_{n_1})\star \ldots \star w_{n_\lambda}(p_{n_\lambda},q_{n_\lambda})}$ (resp. $y^{(p_n)_{n\in\nat}}_F =x_{w_{n_1}(p_{n_1})\star \ldots \star w_{n_\lambda}(p_{n_\lambda})}$). Then
\begin{center}
$R_1$-$\lim_{w\in \widetilde{E}(\vec{w})}x_w=x_0$ if and only if $IP$-$\lim_{F\in [\nat]^{<\omega}_{>0}}y^{((p_n,q_n))_{n\in\nat}}_F =x_0$ uniformly
\end{center}
\begin{center}
 for all sequences $((p_n,q_n))_{n\in\nat}\subseteq \nat\times \nat$ with $1\leq p_n\leq k_{n}$, $1\leq q_n \leq k_{-n}$
\end{center}
\begin{center}
(resp. $R_2$-$\lim_{w\in E(\vec{w})}x_w=x_0$ if and only if $IP$-$\lim_{F\in [\nat]^{<\omega}_{>0}}y^{(p_n)_{n\in\nat}}_F =x_0$ uniformly
\end{center}
\begin{center}
for all sequences $(p_n)_{n\in\nat}\subseteq \nat$ with $1\leq p_n\leq k_{n}$).
\end{center}
\end{prop}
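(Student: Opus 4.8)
The plan is to observe first that the two sides of the claimed equivalence range over exactly the same points of $X$. By definition, each value $y^{((p_n,q_n))_{n\in\nat}}_F$ is $x_z$ for the extracted word $z=w_{n_1}(p_{n_1},q_{n_1})\star\cdots\star w_{n_\lambda}(p_{n_\lambda},q_{n_\lambda})\in\widetilde{E}(\vec{w})$, where $F=\{n_1<\cdots<n_\lambda\}$; conversely every $z\in\widetilde{E}(\vec{w})$ is realized in this form by choosing $F$ to be the indices used and extending the finitely many substitution data $(p_i,q_i)$ to a full sequence arbitrarily within the bounds $1\le p_n\le k_n$, $1\le q_n\le k_{-n}$. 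Thus the whole content of the equivalence is to match the two ``tail'' conditions, namely $\min F\ge n_0$ on the $IP$ side and $\min\{-\max dom^-(z),\min dom^+(z)\}\ge n_0$ on the $R_1$ side, and to check that the $IP$-modulus is indeed independent of the sequence $((p_n,q_n))_{n\in\nat}$.

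The key step is to compute the $R_1$-tail quantity. Set $\phi(n)=\min\{-\max dom^-(w_n),\min dom^+(w_n)\}$. From $w_n<_{R_1}w_{n+1}$ we have $dom(w_{n+1})=A_1\cup A_2$ with $\max A_1<\min dom(w_n)\le\max dom(w_n)<\min A_2$; since $w_n\in\widetilde{L}_0(\Sigma,\vec{k};\upsilon)$ has $\min dom(w_n)<0<\max dom(w_n)$, the block $A_1$ is entirely negative and $A_2$ entirely positive, so $dom^-(w_{n+1})=A_1$ and $dom^+(w_{n+1})=A_2$. This gives $\min dom^+(w_{n+1})=\min A_2>\max dom^+(w_n)\ge\min dom^+(w_n)$ and $-\max dom^-(w_{n+1})=-\max A_1>-\max dom^-(w_n)$. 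Hence both $\min dom^+(w_n)$ and $-\max dom^-(w_n)$ are strictly increasing in $n$, so $\phi$ is strictly increasing; and recalling $n\le\min dom^+(w_n)$ and $-n\ge\max dom^-(w_n)$ we get $\phi(n)\ge n$ for all $n$.

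For $z=w_{n_1}(p_{n_1},q_{n_1})\star\cdots\star w_{n_\lambda}(p_{n_\lambda},q_{n_\lambda})$ the substitutions leave domains unchanged, and by the monotonicity just proved the minimum of $\min dom^+(w_{n_i})$ and the maximum of $\max dom^-(w_{n_i})$ over $1\le i\le\lambda$ are both attained at $i=1$; this yields the clean identity $\min\{-\max dom^-(z),\min dom^+(z)\}=\phi(n_1)=\phi(\min F)$. With it both implications follow. If the $R_1$-limit is $x_0$ and $n_0$ is its modulus for a neighborhood $V$, then any $F$ with $\min F\ge n_0$ gives $\min\{-\max dom^-(z),\min dom^+(z)\}=\phi(\min F)\ge\min F\ge n_0$, so $y_F\in V$; this $n_0$ depends only on $V$, not on $((p_n,q_n))_{n\in\nat}$, which is the required uniformity. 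Conversely, given uniform $IP$-convergence with modulus $n_0$ for $V$, put $m_0=\phi(n_0)$: any $z\in\widetilde{E}(\vec{w})$ with $\min\{-\max dom^-(z),\min dom^+(z)\}\ge m_0$ satisfies $\phi(\min F)\ge\phi(n_0)$, whence $\min F\ge n_0$ by strict monotonicity of $\phi$, so $x_z=y_F\in V$ and the $R_1$-limit is $x_0$. The $R_2$/$L^\infty$ statement is identical with $\phi$ replaced by $\psi(n)=\min dom(w_n)$, which is strictly increasing with $\psi(n)\ge n$ because $w_n<_{R_2}w_{n+1}$ orders the domains into disjoint increasing blocks, so that $\min dom(z)=\psi(\min F)$.

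The main obstacle is precisely the identity $\min\{-\max dom^-(z),\min dom^+(z)\}=\phi(\min F)$: one must read off from the definition of $<_{R_1}$ that the outer boundary of a concatenated extracted word is governed solely by its first constituent $w_{n_1}$, and that $\phi$ is \emph{strictly} monotone, so that the two tail conditions are genuinely equivalent rather than merely comparable in one direction (the strictness is what powers the $m_0=\phi(n_0)$ step, and the uniformity of the $IP$-modulus is exactly what lets the converse choose a realizing sequence for each $z$ freely). Everything after this identity is bookkeeping.
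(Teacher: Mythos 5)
Your proof is correct and follows essentially the same route as the paper's: both directions reduce to matching the tail condition $\min F\ge n_0$ with the condition $\min\{-\max dom^-(z),\min dom^+(z)\}\ge n_0$ via the domain structure forced by $<_{R_1}$ (resp.\ $<_{R_2}$), with uniformity in $((p_n,q_n))_{n\in\nat}$ coming for free since the substitutions do not change domains. The only difference is presentational: you prove the converse directly with the explicit modulus $m_0=\phi(n_0)$ and a clean monotonicity lemma for $\phi$, whereas the paper argues that direction by contradiction; your version makes explicit the quantitative link the paper leaves implicit.
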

\begin{proof}
$(\Rightarrow)$ Let $V$ be a neighborhood of $x_0.$ There exists $n_0\equiv n_0(V)\in \nat$ such that $x_w\in V$ for every $w\in\widetilde{E}(\vec{w}) $ (resp. $w\in E(\vec{w})$) with $\min\{-\max dom^-(w),\min dom^+(w)\}\geq n_0$ (resp. with $\min dom(w)\geq n_0$). So, for $F\in [\nat]^{<\omega}_{>0}$ with  $n_0<\min F$ we have that $y^{((p_n,q_n))_{n\in\nat}}_F\in V$ (resp. $y^{(p_n)_{n\in\nat}}_F\in V$) for all sequences $((p_n,q_n))_{n\in\nat}\subseteq \nat\times \nat$ with $1\leq p_n\leq k_{n}$, $1\leq q_n \leq k_{-n}$ (resp. $(p_n)_{n\in\nat}\subseteq \nat$ with $1\leq p_n\leq k_{n}$).

$(\Leftarrow)$ Toward to a contradiction we suppose that there exists a neighborhood $V$ of $x_0$ such that for every $n\in \nat$ there exists $u_n=w_{m^n_1}(p_{m^n_1},q_{m^n_1})\star \ldots \star w_{m^n_\lambda}(p_{m^n_\lambda},q_{m^n_\lambda})\in \widetilde{E}(\vec{w})$ (resp. $u_n= w_{m^n_1}(p_{m^n_1})\star \ldots \star w_{m^n_\lambda}(p_{m^n_\lambda})\in E(\vec{w})$) with $\min\{-\max dom^-(u_n),$ $\min dom^+(u_n)\}$ $\geq n$ (resp. $\min dom(u_n)\geq n$) and $x_{u_n}\notin V.$ We can suppose that $u_n<_{\textsl{R}_1}u_{n+1}$ (resp. $u_n<_{\textsl{R}_2}u_{n+1}$) for every $n\in \nat.$ According to the hypothesis there exists $n_0\in \nat$ such that $y^{((p_n,q_n))_{n\in\nat}}_F\in V$ (resp. $y^{(p_n)_{n\in\nat}}_F\in V$) for all sequences $((p_n,q_n))_{n\in\nat}\subseteq \nat\times \nat$ with $1\leq p_n\leq k_{n}$, $1\leq q_n \leq k_{-n}$ (resp. $(p_n)_{n\in\nat}\subseteq \nat$ with $1\leq p_n\leq k_{n}$) and all $F\in [\nat]^{<\omega}_{>0}$ with $\min F\geq n_0$. Then $x_{u_{n_0}}\in V$, a contradiction.
\end{proof}

We will now give some applications of Theorem~\ref{3} to topological dynamical systems extending fundamental recurrence results of Birkhoff (\cite{Bi}) and Furstenberg-Weiss (\cite{FuW}, \cite{Fu}). Firstly, we will introduce the notions of  $\widetilde{L}(\Sigma,\vec{k})$-systems and $L(\Sigma,\vec{k})$-systems of continuous maps of a topological space into itself.

\begin{defn} Let $X$ be a topological space, $\Sigma=\{\alpha_n \;:\;n \in \mathbb{Z}^{\ast}\}$ be an alphabet and   $\vec{k}=(k_n)_{n\in\mathbb{Z}^\ast}\subseteq \nat$ such that
$(k_n)_{n\in\nat}$, $(k_{-n})_{n\in\nat}$ are increasing sequences. A family $\{T^w\}_{w\in \widetilde{L}(\Sigma,\vec{k})}$ (resp. $\{T^w\}_{w\in L(\Sigma,\vec{k})}$) of continuous functions of $X$ into itself is an \textbf{$\widetilde{L}(\Sigma,\vec{k})$-system}  (resp. an \textbf{$L(\Sigma,\vec{k})$-system}) of $X$ if $T^{w_1}T^{w_2}=T^{w_1\star w_2}$ for $w_1<_{\textsl{R}_1}w_2$ (resp. for $w_1<_{\textsl{R}_2}w_2$).
\end{defn}

\begin{exa}\label{exa2}
Let $X$ be a topological space.

(1) Let $T : X \rightarrow X$ be a continuous map. For an alphabet
$\Sigma=(m_n)_{n\in \nat}\subseteq \nat$, $\vec{k}=(k_n)_{n\in\nat} \subseteq \nat$ an increasing sequence and $(l_n)_{n\in \nat} \subseteq \nat$ we define for
every $w= w_{n_1} \ldots  w_{n_\lambda}\in L(\Sigma,\vec{k})$
\begin{center}
$T^w =T^{l_{n_1} w_{n_1} + \ldots + l_{n_\lambda} w_{n_\lambda}}$.
\end{center}
 Then $\{T^w\}_{w\in L(\Sigma,\vec{k})}$ is an $L(\Sigma,\vec{k})$-system of $X$.
\newline
Moreover, for a sequence $\{T_n\}_{n\in \nat}$ of continuous maps from $X$ into itself defining
\begin{center}
 $T^{w}=T^{l_{n_1} w_{n_1}}_{n_1}\ldots T^{l_{n_\lambda} w_{n_\lambda}}_{n_\lambda}.$
 \end{center}
we have another  $L(\Sigma,\vec{k})$-system of $X$.

 (2) For a given sequence $\{T_n\}_{n\in \zat^{\ast}}$ of continuous maps from $X$ into itself,  $\Sigma=(\alpha_n)_{n\in \mathbb{Z}^\ast}\subseteq \nat,$ $\vec{k}=(k_n)_{n\in\mathbb{Z}^\ast}\subseteq \nat$ such that
$(k_n)_{n\in\nat},$ $(k_{-n})_{n\in\nat}$ are increasing sequences and $(l_n)_{n\in \mathbb{Z}^\ast} \subseteq \nat$ we define for $w= w_{n_1} \ldots  w_{n_\lambda}\in \widetilde{L}(\Sigma,\vec{k})$
 \begin{center}
 $T^{w_{n_1}\ldots w_{n_\lambda}}=T^{l_{n_1} w_{n_1}}_{n_1}\ldots T^{l_{n_\lambda} w_{n_\lambda}}_{n_\lambda}.$
 \end{center}
 Then $\{T^w\}_{w\in \widetilde{L}(\Sigma,\vec{k})}$ is an $\widetilde{L}(\Sigma,\vec{k})$-system of $X$.
\newline
 In particular, if $T,S : X \rightarrow X$ are two continuous maps, then we can replace $T_n$ with $T^n$ and $T_{-n}$ with $S^n$ for every $n\in\nat$.
 \end{exa}

Via Theorem~\ref{3}, we will prove the existence of strongly recurrent points in a compact metric space $X$ for an $\widetilde{L}(\Sigma,\vec{k})$-system as well as for an $L(\Sigma,\vec{k})$-system of it. Moreover, we will point out the way to locate such points.
\begin{thm}\label{5}
Let $\{T^w\}_{w\in \widetilde{L}(\Sigma,\vec{k})}$ (resp. $\{T^w\}_{w\in L(\Sigma,\vec{k})}$) be an $\widetilde{L}(\Sigma,\vec{k})$-system (resp. $L(\Sigma,\vec{k})$-system) of a compact metric space $(X, d)$, $\vec{w}\in \widetilde{L}^{\infty} (\Sigma, \vec{k} ; \upsilon)$ (resp. $\vec{w}\in L^{\infty} (\Sigma, \vec{k} ; \upsilon)$) and $x\in X$. Then there exist an extraction $\vec{u}\prec\vec{w}$ of $\vec{w}$ and $x_0\in X$ such that
$$\text{$R_1$-}\lim_{w\in \widetilde{E}(\vec{u})}T^w (x)=x_0\;\;(\text{resp.\;\;$R_2$-}\lim_{w\in E(\vec{u})}T^w (x)=x_0).$$
Moreover, $x_0$ is \textbf{$\vec{w}$-recurrent point}, in the sense that
$$\text{$R_1$-}\lim_{w\in \widetilde{E}(\vec{u})}T^w (x_0)=x_0\;\;(\text{resp.\;\;$R_2$-}\lim_{w\in E(\vec{u})}T^w (x_0)=x_0).$$
\end{thm}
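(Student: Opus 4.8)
The plan is to read the first limit straight off Theorem~\ref{3} and then to extract the recurrence from the semigroup relation $T^{w_1}T^{w_2}=T^{w_1\star w_2}$ together with continuity of the maps, keeping the same extraction $\vec{u}$ throughout.

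First I would apply Theorem~\ref{3} to the $\textsl{R}_1$-net $\{T^w(x)\}_{w\in\widetilde{L}(\Sigma,\vec{k})}\subseteq X$ (well defined since each $T^w$ maps $X$ into itself and $X$ is compact). This yields an extraction $\vec{u}=(u_n)_{n\in\nat}\prec\vec{w}$ and a point $x_0\in X$ with $R_1$-$\lim_{w\in\widetilde{E}(\vec{u})}T^w(x)=x_0$, which is the first assertion; this same $x_0$ will be the recurrent point.

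For the recurrence, fix $\epsilon>0$ and, using the first limit, choose $n_0$ with $d(T^z(x),x_0)\leq\epsilon$ for every $z\in\widetilde{E}(\vec{u})$ of depth $\min\{-\max dom^-(z),\min dom^+(z)\}\geq n_0$. Let $w\in\widetilde{E}(\vec{u})$ have depth $\geq n_0$ and let $n_\lambda$ be the largest $\vec{u}$-index appearing in $w$. The crucial point is that whenever $v\in\widetilde{E}(\vec{u})$ is formed from $\vec{u}$-indices strictly larger than $n_\lambda$, the nesting $u_n<_{\textsl{R}_1}u_{n+1}$ forces $dom(v)$ to surround $dom(w)$; hence $w<_{\textsl{R}_1}v$, the concatenation $w\star v$ again belongs to $\widetilde{E}(\vec{u})$, and, since $w$ occupies the innermost portion of the domain, $w\star v$ has exactly the same depth as $w$. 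Therefore $T^w T^v(x)=T^{w\star v}(x)$ with depth$(w\star v)\geq n_0$, so $d(T^w T^v(x),x_0)\leq\epsilon$ for all such $v$.

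Finally I would let $v$ range over words of this form with depth tending to infinity (e.g. $v=u_m(p,q)$ with $m\to\infty$). By continuity of $T^w$ and the first limit, $T^w T^v(x)\to T^w(x_0)$, while every term of this net lies within $\epsilon$ of $x_0$; hence $d(T^w(x_0),x_0)\leq\epsilon$. As $w$ ranged over all words of depth $\geq n_0$ and $\epsilon$ was arbitrary, this gives $R_1$-$\lim_{w\in\widetilde{E}(\vec{u})}T^w(x_0)=x_0$. The $\textsl{R}_2$/$L$-case is identical after replacing $<_{\textsl{R}_1}$ by $<_{\textsl{R}_2}$ and the depth by $\min dom(\cdot)$. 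The main obstacle, and the heart of the matter, is the invariance observation: prepending a fixed deep word $w$ to arbitrarily far-out words $v$ leaves the depth unchanged, so the combined words stay in the $\epsilon$-regime of the first limit. This is exactly what makes the recurrence fall out of the semigroup relation, and is the word-indexed analogue of idempotent/$IP$-recurrence.
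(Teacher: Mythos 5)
Your proof is correct and follows essentially the same route as the paper: apply Theorem~\ref{3} to the net $\{T^w(x)\}$, then combine the relation $T^wT^v=T^{w\star v}$ with the observation that concatenating a farther-out $v$ onto $w$ leaves $\min\{-\max dom^-,\min dom^+\}$ unchanged, and finish by continuity of $T^w$. The only (immaterial) difference is that the paper fixes a single $w_1$ via an explicit $\delta$ from continuity and a triangle inequality, whereas you pass to the limit over a net of $v$'s and use the closed $\epsilon$-ball.
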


\begin{proof}
According to Theorem~\ref{3} there exist an extraction $\vec{u}$ of $\vec{w}$ and $x_0\in X$ such that
$\text{$R_1$-}\lim_{w\in \widetilde{E}(\vec{u})}T^w (x)=x_0\;\; (\text{resp.\;\;$R_2$-}\lim_{w\in E(\vec{u})}T^w (x)=x_0).$

Let $\epsilon>0$. There exists $n_0 \in\nat$ such that $d(T^w (x), x_0)< \frac{\epsilon}{2}$ for every $w\in \widetilde{E}(\vec{u})$ with $\min\{-\max dom^-(w),\min dom^+(w)\}\geq n_0$ (resp.  $w\in E(\vec{u})$ with $\min dom(w)\geq n_0$). Let $w\in \widetilde{E}(\vec{u})$ with $\min\{-\max dom^-(w),\min dom^+(w)\}\geq n_0$ (resp. $w\in E(\vec{u})$ with $\min dom(w)\geq n_0$). Then $d(T^w (x), x_0)< \frac{\epsilon}{2}$. Since $T^w$ is continuous, there exists ${\delta >0}$ such that if $d(z,x_0)<\delta$, then $d(T^w(z),T^w(x_0))<\frac{\epsilon}{2}$. Choose $w_1\in \widetilde{E}(\vec{u})$ (resp. $w_1\in E(\vec{u})$) such that $d(T^{w_1} (x), x_0)< \delta$ and $w<_{\textsl{R}_1} w_1$ (resp. $w<_{\textsl{R}_2} w_1$). Then $d(T^w(T^{w_1} (x)),T^w(x_0))= d(T^{w\star w_1} (x),T^w(x_0)) <\frac{\epsilon}{2}$. Since $d(T^{w\star w_1} (x), x_0) <\frac{\epsilon}{2}$ we have that $d(T^w(x_0),x_0)<\epsilon$.
\end{proof}
In the following corollaries we will describe some consequences of Theorem~\ref{5} for the simplest $\widetilde{L}(\Sigma,\vec{k})$-system  generated by a single transformation.

For a semigroup $(X,+)$ and $(x_n)_{n\in \nat}\subseteq X$ let
\begin{center}
$FS((x_n)_{n\in \nat}) = \{x_{n_1}+\ldots +x_{n_\lambda} : \lambda\in\nat, n_1<\ldots<n_\lambda \in\nat\}.$
\end{center}

\begin{cor}\label{6}
Let $(X,d)$ be a compact metric space, $T : X \rightarrow X$ a continuous map and $(m_n)_{n\in \nat},(r_n)_{n\in \nat}\subseteq \nat$ with $m_n<m_{n+1}, r_n<r_{n+1}$ for $n\in \nat$. Then, there exist $x_0 \in X$ and sequences $(\alpha_n)_{n\in \nat}\subseteq \nat $, $(\beta_n)_{n\in \nat}\subseteq FS((m_n)_{n\in \nat})$, $(\gamma_n)_{n\in \nat}\subseteq FS((r_n)_{n\in \nat})$ such that
$$\textit{IP-}\lim_{F\in [\nat]^{<\omega}_{>0}} T^{\sum_{n\in F}\alpha_n + p_n \beta_n + q_n\gamma_n}(x_0) =x_0,\;
(\textit{in particular},\;\lim_{n} T^{\alpha_n + p_n \beta_n + q_n\gamma_n}(x_0) =x_0)$$
uniformly for all sequences $((p_n,q_n))_{n\in\nat}\subseteq \nat\times \nat$ with $0\leq p_n\leq n$, $0\leq q_n \leq n$.
\end{cor}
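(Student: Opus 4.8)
The plan is to realise the exponents $\sum_{n\in F}(\alpha_n+p_n\beta_n+q_n\gamma_n)$ as the values of a single-transformation system of the type in Example~\ref{exa2}(2) and then to read off the conclusion from Theorem~\ref{5} together with the uniform $IP$-reformulation of Proposition~\ref{4}. I would take all the maps in Example~\ref{exa2}(2) equal to the given $T$, so that $T^{w}=T^{\sum_{i\in dom(w)}l_i\,\phi(w_i)}$, where $\phi(w_i)$ is the numerical value of the letter $w_i$ and $(l_i)_{i\in\zat^\ast}\subseteq\nat$ is a weight sequence to be chosen; since exponents simply add under $\star$ this is a genuine $\widetilde{L}(\Sigma,\vec{k})$-system, the verification being identical to Example~\ref{exa2}(2) (letters of value $0$ are harmless, as they only insert factors $T^{0}=\operatorname{id}$). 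Applying Theorem~\ref{5} to this system, to $\vec{w}$, and to an arbitrary $x\in X$ produces $x_0\in X$ and an extraction $\vec{u}=(u_n)_{n\in\nat}\prec\vec{w}$ with $R_1$-$\lim_{w\in\widetilde{E}(\vec{u})}T^{w}(x_0)=x_0$; setting $x_w:=T^{w}(x_0)$ and feeding $\vec{u}$ into Proposition~\ref{4} converts this into a uniform $IP$-limit, from whose exponents I would read off $(\alpha_n),(\beta_n),(\gamma_n)$.

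Everything rests on choosing $\Sigma$, $\vec{k}$, $\vec{w}$ and $(l_i)$ so that these exponents have the prescribed shape with $0\le p_n,q_n\le n$. I would shift the alphabet and the domination, taking $\alpha_{\pm j}=j-1$ and $k_{\pm j}=j+1$ for $j\in\nat$, so that a substitution $\upsilon\mapsto\alpha_p$ with $1\le p\le k_n=n+1$ installs the value $p-1\in\{0,\dots,n\}$; relabelling $p_n:=p-1$, $q_n:=q-1$ turns the ranges $1\le p,q\le k_n$ of Proposition~\ref{4} into $0\le p_n,q_n\le n$. For $\vec{w}$ I would let $w_m$ have domain $\{-2m,-(2m-1),2m-1,2m\}$, placing a variable $\upsilon$ at $\pm(2m-1)$ and a fixed non-variable letter of value $1$ at $\pm 2m$; one checks directly that $w_m\in\widetilde{L}_0(\Sigma,\vec{k};\upsilon)$ and $w_m<_{\textsl{R}_1}w_{m+1}$, hence $\vec{w}\in\widetilde{L}^{\infty}(\Sigma,\vec{k};\upsilon)$. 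Finally I would set the weights $l_{2m-1}=m_m$, $l_{-(2m-1)}=r_m$ and $l_{2m}=l_{-2m}=1$.

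With these choices, substituting multipliers $(p_n,q_n)$ in the shifted range $0\le p_n,q_n\le n$ into the blocks $u_{n_1},\dots,u_{n_\lambda}$ and concatenating, the resulting exponent splits, over the blocks $u_n$, into a contribution of the non-variable letters and a contribution of the substituted variables. Letting $J(n)$ denote the set of blocks of $\vec{w}$ that remain variable inside $u_n$, the positive variables of $u_n$ sit at positions $2j-1$ ($j\in J(n)$) and are all multiplied by the common value $p_n$, so they contribute $p_n\beta_n$ with $\beta_n:=\sum_{j\in J(n)}l_{2j-1}=\sum_{j\in J(n)}m_j\in FS((m_n))$; symmetrically the negative variables contribute $q_n\gamma_n$ with $\gamma_n:=\sum_{j\in J(n)}r_j\in FS((r_n))$; and the non-variable letters, present in every block and removed neither by substitution nor by concatenation, contribute a fixed integer $\alpha_n\ge 1$, so $\alpha_n\in\nat$. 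Hence the exponent equals $\sum_{n\in F}(\alpha_n+p_n\beta_n+q_n\gamma_n)$ for $F=\{n_1<\cdots<n_\lambda\}$, and Proposition~\ref{4} yields exactly the asserted uniform $IP$-limit; the parenthetical ordinary limit follows at once by specialising the uniform $IP$-convergence to the singletons $F=\{n\}$.

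The step I expect to be the main obstacle is that the extraction $\vec{u}$ delivered by Theorem~\ref{5} is not under our control, so we cannot arrange its blocks $u_n$ by hand; the only way to guarantee simultaneously that every $u_n$ carries a non-variable letter (forcing $\alpha_n\ge 1$) and that its surviving variables sum, under the weights $l_i$, to honest $FS$-terms of $(m_n)$ and of $(r_n)$ is to hard-wire this structure into each block $w_m$ of $\vec{w}$ and to check that it persists under both substitution and concatenation. The second delicate point is the shift $\alpha_{\pm j}=j-1$, equivalently admitting the letter value $0$: it is precisely what produces the endpoints $p_n=0$ and $q_n=0$, which the $IP$-reformulation of Proposition~\ref{4}, stated for $p,q\ge 1$, does not give directly.
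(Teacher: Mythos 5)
Your proposal is correct and follows essentially the same route as the paper: encode the data in a single-transformation $\widetilde{L}(\Sigma,\vec{k})$-system as in Example~\ref{exa2}(2), choose $\vec{w}$ with variables weighted by $m_n$ at positive positions and by $r_n$ at negative ones, apply Theorem~\ref{5} and Proposition~\ref{4}, and shift the substitution parameters by one to reach the range $0\leq p_n,q_n\leq n$. The only differences are cosmetic: the paper takes $dom(w_n)=\{-r_n,m_n\}$ so that the position itself serves as the weight (via $T^{nw_n}$) and keeps the unshifted letter values, obtaining $\alpha_n\geq\beta_n+\gamma_n\geq 1$ automatically instead of inserting auxiliary constant letters as you do.
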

\begin{proof}
Let $\Sigma=(\alpha_n)_{n\in \mathbb{Z}^\ast}\subseteq \nat$ with $\alpha_{-n} = \alpha_n =n$ for $n\in\nat$ and $\vec{k}=(k_n)_{n\in\mathbb{Z}^\ast}\subseteq \nat$ with $k_{-n}=k_n =n+1$ for $n\in\nat$. For $w= w_{n_1} \ldots  w_{n_\lambda}\in \widetilde{L}(\Sigma,\vec{k})$ we set $T^{w_{n_1}\ldots w_{n_\lambda}}=T^{-n_1 w_{n_1}}\ldots T^{-n_i w_{n_i}} T^{n_{i+1} w_{n_{i+1}}}\ldots T^{n_\lambda w_{n_\lambda}},$ where $n_i=\max dom^-(w),$ $n_{i+1}=\min dom^+(w).$ Then
  $\{T^w\}_{w\in \widetilde{L}(\Sigma,\vec{k})}$ is an $\widetilde{L}(\Sigma,\vec{k})$-system of $X$ (see Example~\ref{exa2}(2)). Let $\vec{w}=(w_n)_{n\in \nat}\in \widetilde{L}^{\infty} (\Sigma, \vec{k} ; \upsilon)$ with $w_n =w_{-r_n}w_{m_n}$ where $w_{-r_n}=w_{m_n}=\upsilon$. We apply Theorem~\ref{5}. So, there exist an extraction $\vec{u}=(u_n)_{n\in \nat}\in \widetilde{L}^{\infty} (\Sigma, \vec{k} ; \upsilon)$ of $\vec{w}$ and $x_0\in X$ such that
$R_1\text{-}\lim_{w\in \widetilde{E}(\vec{u})}T^w (x_0)=x_0.$\\ \noindent According to Proposition~\ref{4}, if $y^{((p_n,q_n))_{n\in\nat}}_F = T^{u_{n_1}(p_{n_1},q_{n_1})\star \ldots \star u_{n_\lambda}(p_{n_\lambda},q_{n_\lambda})}(x_0)$, then
\begin{center}
$IP$-$\lim_{F\in [\nat]^{<\omega}_{>0}}y^{((p_n,q_n))_{n\in\nat}}_F =x_0$
\end{center}
uniformly for all sequences $((p_n,q_n))_{n\in\nat}\subseteq \nat\times \nat$ with $1\leq p_n\leq n+1$, $1\leq q_n \leq n+1$.

Let $T^{u_{n}((p_{n},q_{n}))}= T^{\alpha_n + (p_n-1) \beta_n + (q_n-1)\gamma_n}$, where $\beta_n \in FS((m_n)_{n\in \nat})$ and $\gamma_n \in FS((r_n)_{n\in \nat})$. Then
$IP$-$\lim_{F\in [\nat]^{<\omega}_{>0}} T^{\sum_{n\in F}\alpha_n + p_n \beta_n + q_n\gamma_n}(x_0) =x_0$, (in particular, $\lim T^{\alpha_n + p_n \beta_n + q_n\gamma_n}(x_0) =x_0$) uniformly for all sequences $((p_n,q_n))_{n\in\nat}\subseteq \nat\times \nat$ with $0\leq p_n\leq n$, $0\leq q_n \leq n$.
\end{proof}

\begin{cor}\label{7}
Let $(X,d)$ be a compact metric space, $T : X \rightarrow X$ a continuous map and $(m_n)_{n\in \nat},(r_n)_{n\in \nat}\subseteq \nat$ with $m_n<m_{n+1}, r_n<r_{n+1}$ for all $n\in \nat$. Then, there exist $x_0 \in X$ and sequences $(\alpha_n)_{n\in \nat}\subseteq \nat $, $(\beta_n)_{n\in \nat}\subseteq FS((m_n)_{n\in \nat})$, $(\gamma_n)_{n\in \nat}\subseteq FS((r_n)_{n\in \nat})$ such that for every $\epsilon>0$ there exists $n_0\in\nat$ which satisfies
 \begin{center}
$d(T^{ p_n \beta_n + q_n\gamma_n}(T^{\alpha_n}(x_0)), T^{\alpha_n}(x_0))<\epsilon$
\end{center}
for every $n\geq n_0$ and $((p_n,q_n))_{n\in\nat}\subseteq \nat\times \nat$ with $0\leq p_n\leq n$, $0\leq q_n \leq n$.
\end{cor}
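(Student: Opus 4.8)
The plan is to derive Corollary~\ref{7} directly from Corollary~\ref{6}, exploiting the fact that all the maps appearing here are powers of the single transformation $T$, so they commute and compose by adding exponents. I would apply Corollary~\ref{6} verbatim to the same data $T$, $(m_n)_{n\in\nat}$, $(r_n)_{n\in\nat}$, obtaining a point $x_0\in X$ together with sequences $(\alpha_n)_{n\in\nat}\subseteq\nat$, $(\beta_n)_{n\in\nat}\subseteq FS((m_n)_{n\in\nat})$ and $(\gamma_n)_{n\in\nat}\subseteq FS((r_n)_{n\in\nat})$ for which the single-index uniform limit
$$\lim_n T^{\alpha_n+p_n\beta_n+q_n\gamma_n}(x_0)=x_0$$
holds uniformly over all sequences $((p_n,q_n))_{n\in\nat}\subseteq\nat\times\nat$ with $0\le p_n\le n$ and $0\le q_n\le n$. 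These are precisely the $x_0$ and the sequences I will use to witness Corollary~\ref{7}.

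Next, given $\epsilon>0$, the uniform convergence above supplies an $n_0\in\nat$ such that $d(T^{\alpha_n+p_n\beta_n+q_n\gamma_n}(x_0),x_0)<\epsilon/2$ for every $n\ge n_0$ and every admissible pair $(p_n,q_n)$. Specializing this to $p_n=q_n=0$ yields in particular that $d(T^{\alpha_n}(x_0),x_0)<\epsilon/2$ for all $n\ge n_0$. The one identity I would record is that, since the exponents are natural numbers and powers of $T$ compose additively, $T^{p_n\beta_n+q_n\gamma_n}(T^{\alpha_n}(x_0))=T^{\alpha_n+p_n\beta_n+q_n\gamma_n}(x_0)$.

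Finally I would combine these by a two-term triangle inequality: for $n\ge n_0$ and any admissible $(p_n,q_n)$,
$$d(T^{p_n\beta_n+q_n\gamma_n}(T^{\alpha_n}(x_0)),T^{\alpha_n}(x_0))\le d(T^{\alpha_n+p_n\beta_n+q_n\gamma_n}(x_0),x_0)+d(x_0,T^{\alpha_n}(x_0))<\epsilon,$$
which is exactly the assertion. I expect no genuine obstacle here: the statement is essentially a recasting of the single-index conclusion of Corollary~\ref{6}, now measured from the moving base point $T^{\alpha_n}(x_0)$ instead of from $x_0$, and the only ingredients are the semigroup law for powers of $T$ and the triangle inequality. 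The only point meriting a word of care is the uniformity of $n_0$ in $(p_n,q_n)$, but this is inherited directly from the uniform convergence granted by Corollary~\ref{6}.
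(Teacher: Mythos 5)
Your proof is correct and takes exactly the route the paper intends: the paper's entire proof of Corollary~\ref{7} is the single line ``It follows from Corollary~\ref{6},'' and your argument (apply Corollary~\ref{6}, specialize to $p_n=q_n=0$ to control $d(T^{\alpha_n}(x_0),x_0)$, use $T^aT^b=T^{a+b}$ and the triangle inequality) supplies precisely the omitted details. The uniformity of $n_0$ in $(p_n,q_n)$ is indeed inherited from the uniform convergence in Corollary~\ref{6}, as you note.
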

\begin{proof}
It follows from  Corollary~\ref{6}.
\end{proof}
We will define now the recurrent subsets of a compact metric space $X$ for an $\widetilde{L}(\Sigma,\vec{k})$-system as well as for an $L(\Sigma,\vec{k})$-system of it.
\begin{defn}
Let $\{T^w\}_{w\in \widetilde{L}(\Sigma,\vec{k})}$ (resp. $\{T^w\}_{w\in L(\Sigma,\vec{k})}$) be an $\widetilde{L}(\Sigma,\vec{k})$-system (resp. $L(\Sigma,\vec{k})$-system) of continuous maps of a compact metric space $(X, d)$ and $\vec{w}\in\widetilde{L}^{\infty}(\Sigma,\vec{k};\upsilon)$ (resp. $\vec{w}\in L^{\infty}(\Sigma,\vec{k};\upsilon)$).  A closed subset $A$ of $X$ is said to be \textbf{$\vec{w}$-recurrent set}  if for any $m\in \nat,$ $\varepsilon>0$ and any point $x\in A$ there exist $y\in A$ and
$u\in \widetilde{EV}(\vec{w})$  with $\min\{-\max dom^-(u),\min dom^+(u)\}>m$ (resp. $u\in EV(\vec{w})$ with $\min dom(u)\}>m$)  such that $d(T^{u(p,q)}(y),x)<\varepsilon$ for every $1\leq p,q\leq m.$
\end{defn}
In the following example we point out the way to locate recurrent subsets of a compact metric space $X$ for a given $\widetilde{L}(\Sigma,\vec{k})$-system as well as for a given $L(\Sigma,\vec{k})$-system of it.
\begin{exa}\label{exa9}
Let $(X, d)$ be a compact metric space and let $\textsl{F}(X)$ be the set of all nonempty closed subsets of $X$ endowed with the Hausdorff metric $\hat{d}$ (where $\hat{d}(A,B)=\max[\sup_{x\in A}d(x,B) ,\; \sup_{x\in B}d(x,A)]$). Then $(\textsl{F}(X), \hat{d})$ is also a compact metric space. Let $\{T^w\}_{w\in \widetilde{L}(\Sigma,\vec{k})}$ (resp. $\{T^w\}_{w\in L(\Sigma,\vec{k})}$) be an $\widetilde{L}(\Sigma,\vec{k})$-system (resp. $L(\Sigma,\vec{k})$-system) of continuous maps of $(X, d)$. We define $\hat{T}^w : \textsl{F}(X)\rightarrow \textsl{F}(X)$ with $\hat{T}^w (A) = T^w (A)$. Then $\{\hat{T}^w\}_{w\in \widetilde{L}(\Sigma,\vec{k})}$ (resp. $\{\hat{T}^w\}_{w\in L(\Sigma,\vec{k})}$) is an $\widetilde{L}(\Sigma,\vec{k})$-system (resp. $L(\Sigma,\vec{k})$-system) of  $(\textsl{F}(X), \hat{d})$. According to Theorem~\ref{5}, for every $\vec{w}=(w_n)_{n\in \nat}\in \widetilde{L}^{\infty} (\Sigma, \vec{k} ; \upsilon)$ (resp. $\vec{w}=(w_n)_{n\in \nat}\in L^{\infty} (\Sigma, \vec{k} ; \upsilon)$) there exist $A\in \textsl{F}(X)$ and an extraction $\vec{u}\prec\vec{w}$ of $\vec{w}$ such that
$$\text{$R_1$-}\lim_{w\in \widetilde{E}(\vec{u})}\hat{T}^w (A)=A\;\;(\text{resp.\;\;$R_2$-}\lim_{w\in E(\vec{u})}\hat{T}^w (A)=A).$$
Then $A$ is $\vec{w}$-recurrent in $(X, d)$. Observe that it is enough  $\text{$R_1$-}\lim_{w\in \widetilde{E}(\vec{u})}\hat{T}^w (A)\supseteq A$ $(\text{resp.\;\;$R_2$-}\lim_{w\in E(\vec{u})}\hat{T}^w (A)\supseteq A)$ in order $A$ to be $\vec{w}$-recurrent.
\end{exa}

\begin{prop}\label{12}
Let $A$ be a $\vec{w}$-recurrent subset of a compact metric space $(X,d).$ Then for every $\varepsilon>0$ and $m\in \nat$ there exist $u\in \widetilde{EV}(\vec{w})$ with $\min\{-\max dom^-(u) ,$ $ \min dom^+(u)\}>m$ (resp. $u\in EV(\vec{w})$ with $\min dom(u)>m$) and $z\in A$ such that
\begin{center}
$d(T^{u(p,q)}(z),z)<\varepsilon$ for every $1\leq p,q\leq m.$
\end{center}
\end{prop}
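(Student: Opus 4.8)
The plan is to derive the self-recurrence $d(T^{u(p,q)}(z),z)<\varepsilon$ from the one-sided recurrence in the definition by the classical Birkhoff-type device of building a backward chain in $A$ and invoking compactness. Fix $\varepsilon>0$ and $m\in\nat$. Starting from an arbitrary $x_0\in A$, I iterate the defining property of a $\vec{w}$-recurrent set: at stage $k$ I apply it to the point $x_{k-1}$ with a parameter $M_k\geq m$ and a tolerance $\delta_k>0$ (both to be specified), obtaining $x_k\in A$ and $u_k\in\widetilde{EV}(\vec{w})$ with $\min\{-\max dom^-(u_k),\min dom^+(u_k)\}>M_k$ and
$$d(T^{u_k(p,q)}(x_k),x_{k-1})<\delta_k\quad\text{for all }1\leq p,q\leq m.$$
Choosing $M_k>\max\{m,-\min dom(u_{k-1}),\max dom(u_{k-1})\}$ forces every positive coordinate of $dom(u_k)$ to lie above $\max dom(u_{k-1})$ and every negative one below $\min dom(u_{k-1})$; since $u_k\in\widetilde{L}_0(\Sigma,\vec{k};\upsilon)$ has coordinates of both signs, this is exactly $u_{k-1}<_{\textsl{R}_1}u_k$. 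Thus the chain satisfies $u_1<_{\textsl{R}_1}u_2<_{\textsl{R}_1}\cdots$, with all innermost-coordinate bounds exceeding $m$.

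Since $(x_k)_{k\geq 0}$ lies in the compact set $A$, I can pick $i<j$ with $d(x_i,x_j)<\varepsilon/2$, and set $z=x_j$ and $u=u_{i+1}\star\cdots\star u_j$. Because the $u_l$ have pairwise disjoint domains and are $<_{\textsl{R}_1}$-increasing, $u$ is again an extracted variable word in $\widetilde{EV}(\vec{w})$ whose innermost bounds (inherited from $u_{i+1}$) exceed $m$, and its substitution commutes with the concatenation, $(u_{i+1}\star\cdots\star u_j)(p,q)=u_{i+1}(p,q)\star\cdots\star u_j(p,q)$, because $u(p,q)$ simply replaces every positive (resp. negative) occurrence of $\upsilon$ by $\alpha_p$ (resp. $\alpha_{-q}$). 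Repeatedly applying the system identity $T^{w_1}T^{w_2}=T^{w_1\star w_2}$ (valid since $u_{i+1}(p,q)<_{\textsl{R}_1}\cdots<_{\textsl{R}_1}u_j(p,q)$) gives
$$T^{u(p,q)}(z)=T^{u_{i+1}(p,q)}\,T^{u_{i+2}(p,q)}\cdots T^{u_j(p,q)}(x_j).$$

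It then remains to show this point is within $\varepsilon/2$ of $x_i$, for then $d(T^{u(p,q)}(z),z)\le d(T^{u(p,q)}(z),x_i)+d(x_i,x_j)<\varepsilon$. Writing $z_j=x_j$ and $z_{l-1}=T^{u_l(p,q)}(z_l)$, the chain estimates yield the recursion $d(z_{l-1},x_{l-1})\le\omega_l\big(d(z_l,x_l)\big)+\delta_l$, where $\omega_l$ is a common modulus of continuity for the finitely many maps $T^{u_l(p,q)}$, $1\le p,q\le m$. This is the crux and the main obstacle: a single application of $T^{u_l(p,q)}$ may expand distances, so the errors could in principle accumulate over the $j-i$ compositions. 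I would tame this by taking the moduli $\omega_l$ concave (hence subadditive) and, crucially, by fixing each $\delta_k$ only after $u_1,\dots,u_{k-1}$ have been chosen: at stage $k$ all the compositions $\omega_{i+1}\circ\cdots\circ\omega_{k-1}$ (for $0\le i<k$) are already determined continuous functions vanishing at $0$, so I can take $\delta_k$ so small that each of them sends $\delta_k$ below $\varepsilon/2^{k+1}$. Unrolling the recursion with subadditivity then gives $d(z_i,x_i)\le\sum_{l=i+1}^{j}(\omega_{i+1}\circ\cdots\circ\omega_{l-1})(\delta_l)<\sum_{l\ge 1}\varepsilon/2^{l+1}<\varepsilon/2$, uniformly in $1\le p,q\le m$, which closes the estimate. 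The one-sided statement is entirely analogous: one builds a $<_{\textsl{R}_2}$-increasing chain $u_1<_{\textsl{R}_2}u_2<_{\textsl{R}_2}\cdots$ in $EV(\vec{w})$ (now $\min dom(u_k)>M_k$ suffices for the ordering), uses the single-letter substitution $u(p)$ and the identity $T^{w_1}T^{w_2}=T^{w_1\star w_2}$ for $w_1<_{\textsl{R}_2}w_2$, and runs the same compactness-plus-tolerance estimate.
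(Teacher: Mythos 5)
Your argument is correct and follows essentially the same route as the paper's proof: iterate the defining property of a $\vec{w}$-recurrent set to build a backward chain $x_0,x_1,\dots$ in $A$ with $<_{\textsl{R}_1}$-increasing words, delay the choice of each tolerance until the previous words are fixed, and use compactness to find $i<j$ with $d(x_i,x_j)<\varepsilon/2$, concatenating $u=u_{i+1}\star\cdots\star u_j$. The only difference is cosmetic bookkeeping: where you control error propagation via concave (subadditive) moduli of continuity and a geometric series, the paper simply chooses each new tolerance $\varepsilon_r$ small enough that, by continuity of the finitely many already-built composite maps $T^{u_i(p_i,q_i)\star\cdots\star u_r(p_r,q_r)}$, every partial composition stays within $\varepsilon/2$ of its target --- a slightly leaner way to achieve the same uniform bound.
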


\begin{proof}
Let $\varepsilon>0$ and $m\in \nat$. For a $z_0\in A$
and $\varepsilon_1=\varepsilon/2$ there exist $z_1\in A$ and $u_1\in \widetilde{EV}(\vec{w})$ with $\min\{-\max dom^-(u_1),\min dom^+(u_1)\}$ $>m$ (resp. $u_1\in EV(\vec{w})$ with $\min dom(u)>m$) such that $d(T^{u_1(p,q)}z_1,z_0)<\varepsilon$ for every $1\leq p,q\leq m.$

Let have been chosen $z_0,z_1,\ldots,z_r\in A,$ $u_1<_{\textsl{R}_1}\ldots <_{\textsl{R}_1} u_r\in \widetilde{EV}(\vec{w})$ (resp. $u_1<_{\textsl{R}_2}\ldots <_{\textsl{R}_2} u_r\in EV(\vec{w})$) such that
$d(T^{u_{i}(p_{i},q_{i})\star\ldots\star u_j(p_j,q_j)}(z_j),z_{i-1})<\varepsilon/2$ for every $1\leq i\leq j\leq r$ and $1\leq p_l,q_l\leq m,$ for all $i\leq l\leq j.$

Since $T^w$ are continuous functions, there is $\varepsilon_r <\varepsilon/2$ such that if $d(z,z_r)<\varepsilon_r$ then $d(T^{u_{i}(p_{i},q_{i})\star\ldots\star u_r(p_r,q_r)}(z),z_{i-1})<\varepsilon/2$ for every $1\leq i\leq r$ and $1\leq p_l,q_l\leq m,$ for all $i\leq l\leq r.$ Since  $A$ is $\vec{w}$-recurrent, there exist $z_{r+1}\in A$ and $u_{r+1}\in \widetilde{EV}(\vec{w})$ with $u_r<_{\textsl{R}_1} u_{r+1}$ (resp. $u_{r+1}\in EV(\vec{w})$ with $u_r<_{\textsl{R}_2} u_{r+1}$) such that $d(T^{u_{r+1}(p,q)}(z_{r+1}),z_r)<\varepsilon_{r}$ for every $1\leq p,q\leq m.$ Hence, $d(T^{u_{i}(p_{i},q_{i})\star\ldots\star u_{r+1}(p_{r+1},q_{r+1})}(z_{r+1}),z_{i-1})<\varepsilon/2$ for every $1\leq i\leq r+1$ and $1\leq p_l,q_l\leq m,$ for all $i\leq l\leq r+1.$

Since $(X,d)$ is compact, there exist $i<j\in\nat$ such that $d(z_i,z_j)<\varepsilon/2$. Hence, for $u=u_{i+1}\star\ldots\star u_j\in \widetilde{EV}(\vec{w})$ (resp. $u=u_{i+1}\star\ldots\star u_j\in EV(\vec{w})$) we have $d(T^{u(p,q)}z_j,z_j)<\varepsilon$ for every $1\leq p,q\leq m.$
\end{proof}

\begin{defn}
A closed subset $A$ of a compact metric space $X$ is \textbf{homogeneous} with respect to a set of transformations $\{T_i\}$ acting on $X$ if there exists a group of homeomorphisms $G$ of $X$ each of which commutes with each $T_i$ and such that $G$ leaves $A$ invariant and $(A,G)$ is minimal (no proper closed subset of $A$ is invariant under the action of $G$).
\end{defn}
In the following proposition we give a sufficient condition in order a homogeneous subset to be strongly recurrent.
\begin{prop}\label{l1}
Let $A$ is a homogeneous set in a compact metric space $X$ with respect to the system $\{T^w\}_{w\in \widetilde{L}(\Sigma,\vec{k})}$ (resp. $\{T^w\}_{w\in L(\Sigma,\vec{k})}$) and $\vec{w}\in \widetilde{L}^{\infty}(\Sigma,\vec{k};\upsilon)$ (resp. $\vec{w}\in L^{\infty}(\Sigma,\vec{k};\upsilon)$). If for every $\varepsilon>0$ and $m\in \nat$ there exist $x,y\in A$ and $u\in \widetilde{EV}(\vec{w})$ with $\min\{-\max dom^-(u),\min dom^+(u)\}>m$ (resp. $u\in EV(\vec{w})$ with $\min dom(u)\}>m$) such that $d(T^{u(p,q)}(y),x)<\varepsilon$ for every $1\leq p,q\leq m,$ then $A$ is $\vec{w}$-recurrent.
\end{prop}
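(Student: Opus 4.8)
The plan is to show that the single instance of recurrence furnished by the hypothesis can be transported to an arbitrary point of $A$ by means of the minimal group action, the crucial point being to arrange the quantifiers so that the homeomorphisms used are chosen \emph{before} the hypothesis is invoked. Fix $x\in A$, $\varepsilon>0$ and $m\in\nat$; I want to produce $y\in A$ and $u\in\widetilde{EV}(\vec{w})$ (with the required domain condition) so that $d(T^{u(p,q)}(y),x)<\varepsilon$ for all $1\leq p,q\leq m$.

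First I would exploit minimality and compactness. Let $G$ be the group of homeomorphisms witnessing homogeneity and set $U=\{z\in A:d(z,x)<\varepsilon/2\}$, a nonempty open subset of $A$. Then $\bigcup_{g\in G}gU$ is a nonempty, open, $G$-invariant subset of $A$, hence equals $A$ by minimality; by compactness there are $g_1,\dots,g_k\in G$ with $A=\bigcup_{i=1}^k g_iU$. The finite family $\{g_1^{-1},\dots,g_k^{-1}\}$ consists of homeomorphisms of the compact space $X$, each uniformly continuous, so there is a common $\delta>0$ such that $d(z,z')<\delta$ implies $d(g_i^{-1}z,g_i^{-1}z')<\varepsilon/2$ for every $i$ and all $z,z'\in X$.

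Only now would I invoke the hypothesis, applied with $\delta$ in place of $\varepsilon$ and the same $m$: this yields points $a,b\in A$ and a word $u\in\widetilde{EV}(\vec{w})$ satisfying the domain condition $\min\{-\max dom^-(u),\min dom^+(u)\}>m$ and $d(T^{u(p,q)}(b),a)<\delta$ for all $1\leq p,q\leq m$. Since $a\in A=\bigcup_i g_iU$, choose $i$ with $a\in g_iU$, i.e. $d(g_i^{-1}a,x)<\varepsilon/2$, and set $y=g_i^{-1}b$, which lies in $A$ because $G$ leaves $A$ invariant. As each $u(p,q)$ is a constant word in $\widetilde{L}(\Sigma,\vec{k})$, the map $T^{u(p,q)}$ belongs to the system and hence commutes with $g_i^{-1}$; therefore, for every $1\leq p,q\leq m$,
\[
d(T^{u(p,q)}(y),x)=d(g_i^{-1}T^{u(p,q)}(b),x)\leq d\bigl(g_i^{-1}T^{u(p,q)}(b),g_i^{-1}a\bigr)+d(g_i^{-1}a,x)<\frac{\varepsilon}{2}+\frac{\varepsilon}{2}=\varepsilon,
\]
where the middle step uses the uniform modulus $\delta$ together with $d(T^{u(p,q)}(b),a)<\delta$. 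Since $u$ already meets the domain requirement, $A$ is $\vec{w}$-recurrent; the $L(\Sigma,\vec{k})$-case is identical, with $\min dom(u)>m$.

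The hard part, and the reason this is not a one-line translation, is exactly the circular dependence one meets in the naive approach: one is tempted to first pick a single $g\in G$ carrying $a$ near $x$, but $a$ is produced by the hypothesis, and the error $\delta$ one must feed into the hypothesis depends on the modulus of continuity of that $g$. Replacing the single $g$ by a finite cover $\{g_i\}$ coming from minimality and compactness breaks this loop: the uniform $\delta$ is fixed in advance, and whatever point $a$ the hypothesis returns, one of the finitely many $g_i^{-1}$ moves it $\varepsilon/2$-close to $x$ with the prescribed control.
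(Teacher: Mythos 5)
Your proof is correct and follows essentially the same route as the paper's: a finite family of group elements obtained from minimality plus compactness, a uniform modulus $\delta$ fixed for that finite family \emph{before} the hypothesis is invoked, and commutativity of $G$ with the $T^{u(p,q)}$ to transport the recurrence. The only cosmetic difference is that the paper builds a finite set $G_0$ independent of the target point (covering $A$ by sets of diameter $<\varepsilon/2$ and applying minimality to each), so a single word $u$ serves all points of $A$ at once, whereas your cover by translates of the $\varepsilon/2$-neighborhood of $x$ depends on $x$; both satisfy the definition of a $\vec{w}$-recurrent set.
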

\begin{proof}
Let $\varepsilon>0$, $m\in \nat,$ and $G$ be a group of homeomorphisms commuting with $\{T^w\},$ and such that $G$ leaves $A$ invariant and $(A,G)$ is minimal. Let $\{U_1,\ldots,U_r\}$ be a finite covering of $A$ by open sets of diameter $<\varepsilon/2$. Then, from the minimality of $A$, we can find for each $1\leq i\leq r$ a finite set $\{g_{1}^{i},\ldots,g_{1}^{l_i}\}\subseteq G$ such that $\bigcup_{j=1}^{l_i} (g^{i}_{j})^{-1}(U_i)=A.$ Let $G_0=\{g^{i}_{j} : 1\leq i\leq r, 1\leq j\leq l_i\}\subseteq G$. Then for any $x,y\in A$ we have $\min_{g\in G_0}d(g(x),y)<\varepsilon/2.$

Let $\delta>0$ such that if $d(x_1,x_2)<\delta$, then $d(g(x_1),g(x_2))<\varepsilon$ for every $g\in G_0.$ According to the hypothesis, there exist $x,y\in A$ and $u\in \widetilde{EV} (\vec{w})$ with $\min\{-\max dom^-(u),$ $\min dom^+(u)\}>m$ (resp. $u\in EV(\vec{w})$ with $\min dom(u)>m$) such that ${d(T^{u(p,q)}(y),x)<\delta}$ for every $1\leq p,q\leq m$. Then

$d(T^{u(p,q)}(g(y)),g(x))= d(g(T^{u(p,q)}(y)),g(x))<\varepsilon/2$ for every $1\leq p,q\leq m.$

\noindent For a point $z\in A,$ find $g\in G_0$ with $d(g(x),z)<\varepsilon/2$. Then $d(T^{u(p,q)}(g(y)),z)\leq d(T^{u(p,q)}(g(y)),g(x))+ d(g(x),z)<\varepsilon$ for every $1\leq p,q\leq m.$ It follows that $A$ is $\vec{w}$-recurrent.
\end{proof}
We will prove now that a recurrent homogeneous subset $A$ of a compact metric space $X$ contains recurrent points, moreover these points consist a dense subset of $A.$

\begin{prop}\label{l3}
Let $\{T^w\}_{w\in \widetilde{L}(\Sigma,\vec{k})}$ (resp. $\{T^w\}_{w\in L(\Sigma,\vec{k})}$) be an $\widetilde{L}(\Sigma,\vec{k})$-system (resp. $L(\Sigma,\vec{k})$-system) of continuous transformations of a compact metric space $(X, d)$ and $\vec{w}\in\widetilde{L}^{\infty}(\Sigma,\vec{k};\upsilon)$ (resp. $\vec{w}\in L^{\infty}(\Sigma,\vec{k};\upsilon)$). A $\vec{w}$-recurrent homogeneous subset $A$ of $X$ contains $\vec{w}$-recurrent points ($x_0$ is $\vec{w}$-recurrent iff $\text{$R_1$-}\lim_{w\in \widetilde{E}(\vec{u})}T^w (x_0)=x_0$ (resp. iff $R_2$-$\lim_{w\in E(\vec{u})}T^w (x_0)=x_0$) for some $\vec{u}\prec \vec{w}$).

Moreover, the $\vec{w}$-recurrent points of $A$ consist a dense subset of $A$.
\end{prop}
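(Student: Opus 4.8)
The plan is to prove the two assertions separately: first the existence of a single $\vec{w}$-recurrent point inside $A$, and then, using homogeneity, the density of the set of such points. Only the existence step uses that $A$ is $\vec{w}$-recurrent; homogeneity enters solely to spread a single recurrent point over all of $A$.

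For existence I would refine the telescoping construction in the proof of Proposition~\ref{12} so that it controls \emph{all} sub-concatenations, not merely consecutive ones. Concretely, I would build inductively points $z_0,z_1,\ldots\in A$, variable words $u_1<_{\textsl{R}_1}u_2<_{\textsl{R}_1}\cdots$ in $\widetilde{EV}(\vec{w})$ with $\min\{-\max dom^-(u_n),\min dom^+(u_n)\}\to\infty$, and tolerances $\varepsilon_n\downarrow 0$, maintaining the invariant that for every $j$, every $\alpha=\{n_1<\cdots<n_r\}\subseteq\{1,\ldots,j\}$ and every admissible substitution,
$$d\bigl(T^{u_{n_1}(p_{n_1},q_{n_1})\star\cdots\star u_{n_r}(p_{n_r},q_{n_r})}(z_j),\,z_{n_1-1}\bigr)<\varepsilon_{n_1}.$$
The inductive step is as in Proposition~\ref{12}: having $z_0,\ldots,z_j$ and $u_1,\ldots,u_j$, I choose $\delta_j>0$ so small (using continuity of the finitely many maps $T^{u_{\beta}(\cdot)}$, $\beta\subseteq\{1,\ldots,j\}$) that a $\delta_j$-perturbation near $z_j$ is carried by each of them close to its value at $z_j$, and then apply the defining property of a $\vec{w}$-recurrent set to the point $z_j$ to produce $z_{j+1}\in A$ and $u_{j+1}$ (with $u_j<_{\textsl{R}_1}u_{j+1}$, arbitrarily large domain and arbitrarily large substitution bound) such that $d(T^{u_{j+1}(p,q)}(z_{j+1}),z_j)<\delta_j$. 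Peeling off the last factor via the system relation $T^{w_1\star w_2}=T^{w_1}T^{w_2}$ then propagates the invariant to level $j+1$; note that the defining property keeps every $z_j$ inside $A$, which is what finally lands the recurrent point in $A$.

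Passing to a convergent subsequence $z_{j_s}\to x_0\in A$ (possible since $A$ is closed in the compact space $X$), I would take the extraction $\vec{u}\prec\vec{w}$ whose $s$-th term is the block $\tilde{u}_s=u_{j_{s-1}+1}\star\cdots\star u_{j_s}$, so that the least index occurring in any $w\in\widetilde{E}(\vec{u})$ has the form $j_{s_1-1}+1$ and its target $z_{j_{s_1-1}}$ lies on the convergent subsequence. To verify $\text{$R_1$-}\lim_{w\in\widetilde{E}(\vec{u})}T^w(x_0)=x_0$, I fix such a word $w$ with large least index and use the device from the proof of Theorem~\ref{5}: since the invariant holds at \emph{every} level $j\geq\max\alpha$, I may choose $j=j_s$ with $s$ so large that $z_{j_s}$ is within the (word-dependent) continuity modulus of $T^w$ of $x_0$; then $d(T^w(x_0),x_0)$ is bounded by $d(T^w(x_0),T^w(z_{j_s}))+d(T^w(z_{j_s}),z_{j_{s_1-1}})+d(z_{j_{s_1-1}},x_0)$, whose three terms are small by the choice of $j_s$, by the invariant, and by convergence of the subsequence, respectively. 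This yields a $\vec{w}$-recurrent point $x_0\in A$.

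For density, let $G$ be the group witnessing homogeneity. If $g\in G$ then $g$ is a homeomorphism commuting with every $T^w$ and leaving $A$ invariant, so $T^w(g(x_0))=g(T^w(x_0))$ gives $\text{$R_1$-}\lim_{w\in\widetilde{E}(\vec{u})}T^w(g(x_0))=g(x_0)$ along the same $\vec{u}$; hence $g(x_0)\in A$ is again $\vec{w}$-recurrent. Since $(A,G)$ is minimal, the orbit $\{g(x_0):g\in G\}$ is dense in $A$, so the $\vec{w}$-recurrent points are dense. I expect the main obstacle to be the existence step, specifically maintaining the all-subset (IP) form of the invariant through the induction while remaining in $A$, and then discharging the non-uniform continuity of the maps $T^w$ by exploiting that the invariant is available at arbitrarily high levels $j$.
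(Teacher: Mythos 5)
Your density argument (pushing one recurrent point around by the group $G$ and invoking minimality of $(A,G)$) is fine and is in fact a cleaner route than the paper's, which obtains density by running its entire construction inside an arbitrary open set meeting $A$. The existence step, however, has a genuine gap. The invariant you propose to maintain --- for every level $j$ and \emph{every} $\alpha=\{n_1<\cdots<n_r\}\subseteq\{1,\ldots,j\}$, $d\bigl(T^{u_{n_1}(\cdot)\star\cdots\star u_{n_r}(\cdot)}(z_j),z_{n_1-1}\bigr)<\varepsilon_{n_1}$ --- cannot be propagated. The defining property of a $\vec{w}$-recurrent set only produces $z_{j+1}\in A$ with $T^{u_{j+1}(p,q)}(z_{j+1})$ close to $z_j$; it gives no control on $z_{j+1}$ itself, hence none on $T^{u_\alpha(\cdot)}(z_{j+1})$ for those $\alpha\subseteq\{1,\ldots,j\}$ with $j+1\notin\alpha$. ``Peeling off the last factor'' covers exactly the sets with $\max\alpha=j+1$; for all other $\alpha$ the level-$(j+1)$ inequality is unsupported. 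And you genuinely need those other cases: your final verification evaluates the invariant at $j=j_s\gg\max\alpha$ precisely to beat the word-dependent continuity modulus of $T^w$ at $x_0$. If you retreat to the propagatable form (evaluate only at $z_{\max\alpha}$), the term $d(T^w(x_0),T^w(z_{j_{s_k}}))$ is controlled by a modulus depending on $w$ while $d(z_{j_{s_k}},x_0)$ is a fixed quantity determined by $w$, and $x_0$ is only known after the subsequence is chosen --- the estimate cannot be closed. (A side remark: homogeneity is not only a density device; the paper uses it already in the existence step, to move the near-returning point supplied by Proposition~\ref{12} into a prescribed open set so that the construction can be iterated.)

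The paper sidesteps both difficulties by inducting on nested \emph{open sets} rather than points: it builds open $V_n$ with $\overline{V_n}\subseteq V_{n-1}$, $V_n\cap A\neq\emptyset$, $\operatorname{diam}V_n\to 0$ and $T^{u_n(p,q)}V_n\subseteq V_{n-1}$. The inclusion $V_n\subseteq V_{n-1}$ is exactly what permits skipping levels, so $T^{u_{i_1}(\cdot)\star\cdots\star u_{i_k}(\cdot)}V_{i_k}\subseteq V_{i_1-1}$ for arbitrary $i_1<\cdots<i_k$; the point $x_0\in\bigcap_n V_n\cap A$ then satisfies $T^w(x_0)\in V_{i_1-1}$ with no appeal to any continuity modulus at the final stage, since $x_0$ and $T^w(x_0)$ lie in a common set of small diameter. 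To salvage a point-based argument you would have to carry this open-neighbourhood slack through the induction, which amounts to reconstructing the paper's proof.
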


\begin{proof}
Let $V$ be an open subset of $X$ such that $V\cap A\neq \emptyset$ and let $V'\subseteq V$ be an open set such that $V'\cap A\neq \emptyset$ and if $d(x,V')<\delta$ for $\delta>0$ then $x\in V.$ Since $A$ is homogeneous, there exists a group $G$ of homeomorphisms commuting with $\{T^w\}$ and such that $G$ leaves $A$ invariant and $(A,G)$ is minimal. From the minimality of $A$, there exists a finite subset $G_0\subseteq G$ such that $A\subseteq \bigcup_{g\in G_0}g^{-1}(V')$.

 Choose $\varepsilon>0$ such that whenever $x_1,x_2\in X$ and $d(x_1,x_2)<\varepsilon,$ then $d(g(x_1),g(x_2))<\delta$ for every $g\in G_0.$ Since $A$ is $\vec{w}$-recurrent, according to Proposition~\ref{12}, for $m\in \nat$ there exist $z\in A$ and $u\in \widetilde{EV}(\vec{w})$ with $\min\{-\max dom^-(u) , \min dom^+(u)\}$ $>m$ (resp. $u\in EV(\vec{w})$ with $\min dom(u)>m$) such that $d(T^{u(p,q)}(z),z)<\varepsilon$ for all $1\leq p,q\leq m.$

There exists $g\in G_0$ with $g(z)\in V'$ and since $d(T^{u(p,q)}(g(z)),g(z))<\delta$ for every $1\leq p,q\leq m,$ we have that $T^{u(p,q)}(g(z))\in V$ for every $1\leq p,q\leq m.$ Hence, each open set $V$ with $V\cap A\neq \emptyset$ contains a point $z'=g(z)\in A$ with $T^{u(p,q)}z'\in V$ for every $1\leq p,q\leq m.$ Since $T^w$ are continuous, we conclude that for every open set $V$ with $V\cap A\neq \emptyset$ and every $m\in \nat$ there exists an open set $V_1$ such that $\overline{V_{1}}\subseteq V$ and $T^{u(p,q)}V_1\subseteq V$ for every $1\leq p,q\leq m,$ for some $u\in \widetilde{EV}(\vec{w})$ with $\min\{-\max dom^-(u),\min dom^+(u)\}>m$ (resp. $u\in EV(\vec{w})$ with $\min dom(u)>m$).

Let $V_0$ be an open subset of $X$ such that $V_0 \cap A\neq \emptyset$. Inductively we can define a sequence $(V_n)_{n\in \nat}$ of open sets and a sequence $\vec{u}=(u_n)_{n\in \nat}\in \widetilde{L}^{\infty}(\Sigma,\vec{k};\upsilon)$ (resp.  $\vec{u}=(u_n)_{n\in \nat}\in L^{\infty}(\Sigma,\vec{k};\upsilon)$) with $\vec{u}\prec \vec{w}$ such that $\overline{V_{n}}\subseteq V_{n-1},$ $V_{n}\cap A\neq \emptyset$ and  $T^{u_{n}(p_{n},q_{n})}V_{n}\subseteq V_{n-1}$ for every $n\in\nat$ and $1\leq p_{n}\leq k_{n},$ $1\leq q_{n}\leq k_{-n}$.  We can also suppose that the diameter of $V_n$ tends to $0$. Then $\bigcap_{n\in\nat}V_{n}\cap A =\{x_0\}.$

For $1<i_1<\ldots<i_k,$ we have that $T^{u_{i_1}(p_{i_1},q_{i_1})\star\ldots\star u_{i_k}(p_{i_k},q_{i_k})}V_{i_k}\subseteq V_{i_1-1}.$ Then $T^w(x_0) \in V_i$ for every $w\in \widetilde{E}(\vec{u})$ with $u_{i+1}<_{\textsl{R}_1} w$ (resp. $w\in E(\vec{u})$ with $u_{i+1}<_{\textsl{R}_2} w$) so $R_1\text{-}\lim_{w\in \widetilde{E}(\vec{u})}T^w(x_0)=x_0$ (resp. $R_2$-$\lim_{w\in \widetilde{E}(\vec{u})}T^w (x_0)=x_0$). Hence, $x_0\in A\cap V_0$ is a  $\vec{w}$-recurrent point. This gives that the set of $\vec{w}$-recurrent points in $A$ is dense in $A$.
\end{proof}

Now, we shall prove a multiple recurrence theorem extending Theorem~\ref{5}, in case the transformations are homeomorphisms. We can say that the following theorem is the $``$word''-analogue of Birkhoff's multiple recurrence theorem.

\begin{thm}\label{114}
Let $\{T_1^w\}_{w\in \widetilde{L}(\Sigma,\vec{k})},\ldots,\{T_m^w\}_{w\in \widetilde{L}(\Sigma,\vec{k})}$ $($resp. $\{T_1^w\}_{w\in L(\Sigma,\vec{k})},\ldots,\{T_m^w\}_{w\in L(\Sigma,\vec{k})})$ be $m$ systems of transformations of a compact metric space $X,$ all contained in a commutative group $G$ of homeomorphisms of $X$ and let $\vec{w}\in\widetilde{L}^{\infty}(\Sigma,\vec{k};\upsilon)$ (resp. $\vec{w}\in L^{\infty}(\Sigma,\vec{k};\upsilon)$). Then, there exist $x_0\in X$ and an extraction $\vec{u}\prec \vec{w}$ such that
\begin{center}
$\text{$R_1$-}\lim_{w\in \widetilde{E}(\vec{u})}T_i^w (x_0)=x_0$ (resp. $\text{$R_2$-}\lim_{w\in E(\vec{u})}T_i^w (x_0)=x_0$) for every $1\leq i\leq m.$
\end{center}
Moreover, in case $(X,G)$ is minimal, the set of such points $x_0$ is a dense subset of $X$.
 \end{thm}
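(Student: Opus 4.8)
The plan is to reduce the statement to the single–system results of Section~2 by passing to the product space; I describe the $\widetilde{L}(\Sigma,\vec{k})$–case, the $L$–case being identical with $R_2$ and $<_{\textsl{R}_2}$ in place of $R_1$ and $<_{\textsl{R}_1}$ throughout. First I would reduce the existence assertion to the minimal case: by Zorn's lemma choose a nonempty closed $G$–invariant set $X_0\subseteq X$ on which $(X_0,G)$ is minimal; since every $T_i^w\in G$ preserves $X_0$, the systems restrict to $X_0$, and a point of $X_0$ with the desired simultaneous recurrence is a fortiori such a point of $X$. Thus it suffices to treat the case where $(X,G)$ is minimal and to prove the stronger density statement. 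Next I would form the product space $Y=X^m$ with the metric $\hat d(\mathbf x,\mathbf y)=\max_{1\le i\le m} d(x_i,y_i)$ and the product system $\mathbf T^w=T_1^w\times\cdots\times T_m^w$; since each coordinate satisfies $T_i^{w_1}T_i^{w_2}=T_i^{w_1\star w_2}$, this is again an $\widetilde{L}(\Sigma,\vec{k})$–system of $Y$. Let $\Delta=\{(x,\dots,x):x\in X\}$ be the diagonal and let $\hat G$ act on $Y$ diagonally. Because $G$ is commutative and each $T_i^w\in G$, the homeomorphisms in $\hat G$ commute with every $\mathbf T^w$; moreover $\hat G$ leaves $\Delta$ invariant and, as $x\mapsto(x,\dots,x)$ conjugates $(X,G)$ to $(\Delta,\hat G)$, the system $(\Delta,\hat G)$ is minimal. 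Hence $\Delta$ is a homogeneous subset of $Y$ for $\{\mathbf T^w\}$.

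With this in place the theorem reduces to a single assertion: $\Delta$ is a $\vec{w}$–recurrent subset of $Y$. Indeed, granting this, Proposition~\ref{l3} applied to $\{\mathbf T^w\}$ and the homogeneous set $\Delta$ produces a dense set of $\vec{w}$–recurrent points of $\Delta$; a $\vec{w}$–recurrent point $(x_0,\dots,x_0)\in\Delta$ is exactly a point with $\text{$R_1$-}\lim_{w\in\widetilde{E}(\vec{u})}T_i^w(x_0)=x_0$ for all $i$ along a common extraction $\vec{u}\prec\vec{w}$, and density in $\Delta$ transfers to density of such $x_0$ in $X$. To prove that $\Delta$ is $\vec{w}$–recurrent I would invoke Proposition~\ref{l1}: since $\Delta$ is homogeneous, it is enough to verify the weak closeness condition, namely that for every $\varepsilon>0$ and $m'\in\nat$ there are $\mathbf x,\mathbf y\in\Delta$ and $u\in\widetilde{EV}(\vec{w})$ of large domain with $\hat d(\mathbf T^{u(p,q)}\mathbf y,\mathbf x)<\varepsilon$ for all $1\le p,q\le m'$. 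Writing $\mathbf x=(x,\dots,x)$ and $\mathbf y=(y,\dots,y)$, this is the requirement that a single $y\in X$ have all of its images $T_i^{u(p,q)}(y)$, $1\le i\le m$, clustered near a single $x\in X$, uniformly in $p,q$.

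The closeness condition I would establish by induction on $m$, the case $m=1$ being Theorem~\ref{5}. For the inductive step set $S_i^w=T_i^w\,(T_m^w)^{-1}$ for $1\le i\le m-1$; using that $G$ is commutative and consists of homeomorphisms one checks $S_i^{w_1}S_i^{w_2}=S_i^{w_1\star w_2}$, so the $S_i$ form $m-1$ systems in $G$. The inductive hypothesis yields an extraction $\vec{u}\prec\vec{w}$ and $x^*\in X$ with $\text{$R_1$-}\lim_{w\in\widetilde{E}(\vec{u})}S_i^w(x^*)=x^*$ for $1\le i\le m-1$, and by Remark~\ref{rem2}(2) this convergence is uniform over the substitution parameters. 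For a word $w$ put $y_w=(T_m^w)^{-1}(x^*)$; then the algebraic identities $T_i^w(y_w)=S_i^w(x^*)$ for $i<m$ and $T_m^w(y_w)=x^*$ give, with no use of continuity, that $\mathbf T^w(y_w,\dots,y_w)$ tends to $\mathbf x^*=(x^*,\dots,x^*)\in\Delta$ along $\widetilde{E}(\vec{u})$. Thus I obtain, for a cofinal family of words, diagonal base points $\mathbf y_w\in\Delta$ whose images return near the fixed diagonal point $\mathbf x^*$.

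The main obstacle is precisely that these base points $\mathbf y_w$ move with $w$, whereas Proposition~\ref{l1} demands a single base point with uniform closeness over all $1\le p,q\le m'$. To bridge this I would pass, via the compactness/extraction argument of Theorem~\ref{3}, to a sub–extraction along which the diagonal net $\mathbf y_w$ itself $R_1$–converges to some $\mathbf y^*\in\Delta$, and then use the homogeneity of $\Delta$ — in the manner of the proofs of Proposition~\ref{12} and Proposition~\ref{l1}, where the finite family $G_0\subseteq\hat G$ coming from minimality is used to spread a single near–return to all points of $\Delta$ — to replace the moving base by $\mathbf y^*$ while keeping the return target on $\Delta$. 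The delicate point, and the crux of the whole argument, is that the maps $\mathbf T^w$ are not equicontinuous, so the passage from the moving base $\mathbf y_w$ to the fixed base $\mathbf y^*$ cannot be carried out by naive continuity; it is the commuting minimal group action $\hat G$, exactly as encoded in the homogeneity hypothesis, that compensates for this and yields the single–base uniform closeness required to apply Proposition~\ref{l1}. Once $\Delta$ is shown $\vec{w}$–recurrent, Proposition~\ref{l3} finishes both the existence and the density statements.
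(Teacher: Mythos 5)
Your proposal follows the paper's proof essentially step for step: reduction to the case where $(X,G)$ is minimal, induction on the number of systems via the auxiliary maps $S_i^w=T_i^w(T_{m+1}^w)^{-1}$, passage to the product space with the diagonal $\Delta^{m+1}$ as a homogeneous set for the product system, and the final appeal to Propositions~\ref{l1} and~\ref{l3}. The only place you depart from the paper is that you stop to isolate a difficulty at the point where the hypothesis of Proposition~\ref{l1} must be verified, and here your diagnosis is accurate: that hypothesis requires a \emph{single} pair $x,y\in\Delta^{m+1}$ and a single variable word $u$ with $d(T^{u(p,q)}(y),x)<\varepsilon$ for \emph{all} $1\leq p,q\leq m$, whereas the inductive step only supplies, for each substituted instance $w=u(p,q)$, the base point $\bigl((T_{m+1}^{w})^{-1}(y),\ldots,(T_{m+1}^{w})^{-1}(y)\bigr)$, which moves with $(p,q)$. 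The paper's own proof passes over this silently: its closing display verifies the hypothesis of Proposition~\ref{l1} with exactly this $w$-dependent base point, and the single-base uniformity over substitutions is load-bearing there and again in Proposition~\ref{l3} (where one open set $V_1$ must satisfy $T^{u(p,q)}V_1\subseteq V$ for all $p,q$ simultaneously). So you have correctly located the crux of the argument rather than introduced a new obstacle.

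That said, your proposed bridge --- extract so that the moving diagonal points $R_1$-converge to some $\mathbf y^*$, then use the finite set $G_0$ furnished by minimality to transport the near-return to the fixed base --- is only a plan, and as you yourself concede it cannot be closed by continuity alone; nothing in the sketch explains how the action of $\hat G$ actually converts the $(p,q)$-dependent base points into a single one, since $G_0$ moves targets around $A$ but does not reconcile the finitely many distinct maps $T_{m+1}^{u(p,q)}$. As written, then, your argument reproduces the paper's architecture exactly and is incomplete at exactly the one step the paper also leaves unjustified; a full proof would require either completing that step in detail or strengthening the inductive hypothesis so that the recurrence it delivers is already uniform over the substitutions with a common base point.
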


\begin{proof}
We assume without loss of generality that $(X,G)$ is minimal, otherwise we replace $X$ by a $G$-minimal subset of $X$. For $m=1$ we obtain the assertion from Theorem~\ref{5}. We proceed by induction. Suppose that the theorem holds for $m\in \nat$ and that $\{T_1^w\}_{w\in \widetilde{L}(\Sigma,\vec{k})},\ldots,\{T_{m+1}^w\}_{w\in \widetilde{L}(\Sigma,\vec{k})}$ (resp. $\{T_1^w\}_{w\in L(\Sigma,\vec{k})},\ldots,\{T_{m+1}^w\}_{w\in L(\Sigma,\vec{k})}$) are $m+1$ such  systems. We set $S_i^w=T_i^w(T_{m+1}^w)^{-1}$ for all $1\leq i\leq m.$ Then $S_i^{w_1\star w_2}=S_i^{w_1}S_i^{w_2}$ for every $1\leq i\leq m$ and $w_1<_{\textsl{R}_1}w_2$ (resp. $w_1<_{\textsl{R}_2}w_2$), since all the maps commute. By the induction hypothesis there exist $y\in X$ and $\vec{u}\prec \vec{w}$ such that $R_1$-$\lim_{w\in \widetilde{E}(\vec{u})}S_i^w (y)=y$ (resp. $R_2$-$\lim_{w\in E(\vec{u})}S_i^w (y)=y$) for every $1\leq i\leq m.$

Consider the product $X^{m+1}$ and let $\Delta^{m+1}$ be the diagonal subset consisting of the $(m+1)$-tuples $(x,\ldots,x)\in X^{m+1}.$ Identifying each $g\in G$ with $g\times\ldots\times g$ we can assume that $G$ acts on $X^{m+1}$. Also, the functions $T_1^w\times\ldots\times T_{m+1}^w$ acts on $X^{m+1}$ and commute with the functions of $G$. Since $G$ leaves $\Delta^{m+1}$ invariant and $(\Delta^{m+1},G)$ is minimal, $\Delta^{m+1}$ is a homogeneous set. According to Proposition~\ref{l3}, it suffices to prove that $\Delta^{m+1}$ is $\vec{w}$-recurrent. But, according to Proposition~\ref{l1}, the set $\Delta^{m+1}$ is $\vec{w}$-recurrent, since $\text{$R_1$-}\lim_{w\in \widetilde{E}(\vec{u})}(T_1^w\times\ldots\times T_{m+1}^w) [((T_{m+1}^w)^{-1}\times\ldots\times (T_{m+1}^w)^{-1})((y,\ldots,y))]=(y,\ldots,y)$ (resp. $\text{$R_2$-}\lim_{w\in E(\vec{u})}(T_1^w\times\ldots\times T_{m+1}^w) [((T_{m+1}^w)^{-1}\times\ldots\times (T_{m+1}^w)^{-1})((y,\ldots,y))]=(y,\ldots,y)$).
\end{proof}
Theorem~\ref{114} has the following consequence.

\begin{prop}\label{15}
Let $\{T_1^w\}_{w\in \widetilde{L}(\Sigma,\vec{k})},\ldots,\{T_m^w\}_{w\in \widetilde{L}(\Sigma,\vec{k})}$ $($resp. $\{T_1^w\}_{w\in L(\Sigma,\vec{k})},  \ldots,$ $ \{T_m^w\}_{w\in L(\Sigma,\vec{k})})$ be $m$ systems of transformations of a compact metric space $X,$ all contained in a commutative group $G$ of homeomorphisms of $X$, which acts minimally on $X$. For $\vec{w}\in\widetilde{L}^{\infty}(\Sigma,\vec{k};\upsilon)$ (resp. $\vec{w}\in L^{\infty}(\Sigma,\vec{k};\upsilon)$) and $U$ a non-empty open subset of $X$, there exists $\vec{u}\prec \vec{w}$ so that $$\bigcap^m_{i=1}(T_i^w)^{-1}(U)\neq \emptyset\;\;\text{for every}\;\;w\in \widetilde{E}(\vec{u})\;\;\;(\text{resp.}\;w\in E(\vec{u})).$$
\end{prop}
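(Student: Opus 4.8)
The plan is to deduce this directly from the density half of Theorem~\ref{114}. Since $(X,G)$ is minimal by hypothesis, Theorem~\ref{114} guarantees that the set of points $x_0$ admitting an extraction $\vec{u}\prec\vec{w}$ with $R_1$-$\lim_{w\in\widetilde{E}(\vec{u})}T_i^w(x_0)=x_0$ for every $1\leq i\leq m$ (resp. with the $R_2$-analogue) is dense in $X$. As $U$ is non-empty and open, I would first choose such a $\vec{w}$-recurrent point $x_0$ lying \emph{inside} $U$, together with a witnessing extraction $\vec{u}=(u_n)_{n\in\nat}\prec\vec{w}$; it is essential that $x_0\in U$, which is exactly what the density statement (rather than mere existence) provides.

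The second step converts the limit into genuine membership in $U$. Because $U$ is open and $x_0\in U$, the set $U$ is a neighborhood of $x_0$, so for each $i$ the convergence $T_i^w(x_0)\to x_0$ yields an index $n_0^{(i)}$ with $T_i^w(x_0)\in U$ whenever $w\in\widetilde{E}(\vec{u})$ satisfies $\min\{-\max dom^-(w),\min dom^+(w)\}\geq n_0^{(i)}$ (resp. $\min dom(w)\geq n_0^{(i)}$). Putting $n_0=\max_{1\leq i\leq m}n_0^{(i)}$, every such $w$ satisfies $T_i^w(x_0)\in U$ simultaneously for all $i$, that is $x_0\in\bigcap^m_{i=1}(T_i^w)^{-1}(U)$, so this intersection is non-empty.

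At this point the conclusion holds only for words of sufficiently large domain, whereas the proposition demands it for \emph{all} $w\in\widetilde{E}(\vec{u})$. To remove the restriction I would pass to the shifted sequence $\vec{u}_0=(u_{n+n_0})_{n\in\nat}$, exactly as in Remark~\ref{rem2}(1): each term equals $u_{n+n_0}(\upsilon,\upsilon)$ and so lies in $\widetilde{EV}(\vec{u})$, whence $\vec{u}_0\prec\vec{u}\prec\vec{w}$ by transitivity of $\prec$, while $\vec{u}_0\in\widetilde{L}^{\infty}(\Sigma,\vec{k};\upsilon)$. Every $w\in\widetilde{E}(\vec{u}_0)$ is built from the words $u_{n+n_0}$ with $n\geq1$, so its smallest positive and largest negative domain entries come from $u_{n_1+n_0}$ for some $n_1\geq1$, forcing $\min\{-\max dom^-(w),\min dom^+(w)\}\geq 1+n_0>n_0$. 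By the second step, $x_0\in\bigcap^m_{i=1}(T_i^w)^{-1}(U)\neq\emptyset$ for every $w\in\widetilde{E}(\vec{u}_0)$, which is the assertion with $\vec{u}_0$ playing the role of $\vec{u}$. The $L$-case is identical, replacing $R_1$, $\widetilde{E}$, $\widetilde{EV}$ by $R_2$, $E$, $EV$ and the two-sided domain condition by $\min dom(w)\geq n_0$.

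The main obstacle I anticipate is this final passage: Theorem~\ref{114} only provides an asymptotic ($R_1$- or $R_2$-limit) recurrence, so the intersection property is first obtained merely for large domains; one must verify that shifting the extraction is legitimate (a shift is a genuine extraction and $\prec$ is transitive) and that it indeed forces every extracted word to have large domain. The remaining verifications are routine.
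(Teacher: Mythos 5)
Your proof is correct, but it takes a genuinely different route from the paper's. You invoke the \emph{density} clause of Theorem~\ref{114} to place a simultaneously recurrent point $x_0$ inside $U$ itself, after which only the definition of $R_1$- (resp.\ $R_2$-) convergence and a tail-shift of the extraction are needed; your verification that $\vec{u}_0=(u_{n+n_0})_{n\in\nat}$ is a genuine extraction and that every $w\in\widetilde{E}(\vec{u}_0)$ satisfies $\min\{-\max dom^-(w),\min dom^+(w)\}>n_0$ is exactly the point that needs checking, and it goes through because $n\le\min dom^+(u_n)$ and $-n\ge\max dom^-(u_n)$ for any sequence in $\widetilde{L}^{\infty}(\Sigma,\vec{k};\upsilon)$. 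The paper instead uses only the \emph{existence} clause of Theorem~\ref{114}: by minimality it covers $X=\bigcup_{g\in G_0}g^{-1}(U)$ with finitely many translates, takes a Lebesgue number $\delta$ for this cover, refines $\vec{u}$ so that $d(T_i^w(x_0),x_0)<\delta/2$ for all $w\in\widetilde{E}(\vec{u})$ and all $i$ (the same tail-shift you perform, left implicit), concludes that $x_0$ together with all the points $T_i^w(x_0)$ lies in a single $g^{-1}(U)$, and then uses the commutativity of $g$ with the $T_i^w$ to get $g(x_0)\in\bigcap_{i=1}^m(T_i^w)^{-1}(U)$. Your argument is shorter because the work of localizing the recurrent point near a prescribed open set has already been done inside the proof of the density clause (via Proposition~\ref{l3}), whereas the paper redoes that localization by hand with group translates and so leans only on the weaker existence statement; there is no circularity in your use of the density clause, since it is available precisely under the minimality hypothesis assumed here.
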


\begin{proof}
Since $G$ acts minimally on $X$, $X=\bigcup_{g\in G_0}g^{-1}(U)$, where $G_0$ is a finite subset of $G$. Let $\delta>0$ be such that every set of diameter $<\delta$ is contained in some $g^{-1}(U)$ for $g\in G_0.$ According to Theorem~\ref{114}, there exist $x_0\in X$ and $\vec{u}\prec \vec{w}$ such that $R_1\text{-}\lim_{w\in \widetilde{E}(\vec{u})}T_i^w (x_0)=x_0$ (resp. $R_2$-$\lim_{w\in E(\vec{u})}T_i^w (x_0)=x_0$) for every $1\leq i\leq m$. Refine $\vec{u}$ such that $d(T_i^w (x_0) ,x_0)<\delta/2$ for every $w\in \widetilde{E}(\vec{u})$ (resp. $w\in E(\vec{u})$) and $1\leq i\leq m.$ Then there exists $g\in G_0$ such that $T_i^w (x_0)\in g^{-1}(U)$ for every $w\in \widetilde{E}(\vec{u})$ (resp. $w\in E(\vec{u}))$ and $1\leq i\leq m$. Consequently, $g(x_0)\in \bigcap^m_{i=1}(T_i^w)^{-1}(U)$ for every $w\in \widetilde{E}(\vec{u})$ (resp. $w\in E(\vec{u})).$
\end{proof}

\section{Applications}

We will indicate the way in which the recurrence results for topological systems or nets indexed by words, that we proved in the previous section, can be applied to systems or nets indexed by semigroups that can be represented as words (Example~\ref{exa1}) and consequently to systems or nets indexed by an arbitrary semigroup.

\subsection*{Semigroup $(\qat,+)$}

As we described in Example~\ref{exa1}(1), the set $\mathbb{Q}^\ast$ of the nonzero rational numbers can be identified with a set $\widetilde{L}(\Sigma,\vec{k})$ of $\omega$-$\mathbb{Z}^\ast$-located words, via the function
\begin{center}
$g : \widetilde{L}(\Sigma,\vec{k}) \rightarrow \mathbb{Q}^\ast$, with   $\;g(q_{t_1}\ldots q_{t_l})=\sum_{t\in dom^-(w)}q_t\frac{(-1)^{-t}}{(-t+1)!}\;+\;\sum_{t\in dom^+(w)}q_t(-1)^{t+1}t!.$  \end{center}
We extend the function $g$ to the set $\widetilde{L}(\Sigma,\vec{k};\upsilon)$ of variable words corresponding to each $w=q_{t_1}\ldots q_{t_l}\in \widetilde{L}(\Sigma,\vec{k};\upsilon)$ a function $q=g(w)$ which sends every $(i,j)\in \nat\times\nat$ with $1\leq i\leq -\max dom^-(w),$ $1\leq j\leq \min dom^+(w),$ to $$q(i,j)=g(T_{(j,i)}(w))= \sum_{t\in C^-}q_t\frac{(-1)^{-t}}{(-t+1)!} + i\sum_{t\in V^-}\frac{(-1)^{-t}}{(-t+1)!} +  \sum_{t\in C^+}q_t(-1)^{t+1}t!+j\sum_{t\in V^+}(-1)^{t+1}t!,$$ where $C^-=\{t\in dom^-(w):\;q_t\in \Sigma\},$ $V^-=\{t\in dom^-(w):\;q_t=\upsilon\}$ and $C^+=\{t\in dom^+(w):\;q_t\in \Sigma\},$ $V^+=\{t\in dom^+(w):\;q_t=\upsilon\}.$ Let $\mathbb{Q}(\upsilon) = g(\widetilde{L}(\Sigma,\vec{k};\upsilon))$. Then the extended  function
\begin{center}
$g:\widetilde{L}(\Sigma\cup\{\upsilon\},\vec{k})\rightarrow \mathbb{Q}^\ast\cup \mathbb{Q}(\upsilon)$
\end{center}
is one-to-one and onto. For $q_1,q_2\in \mathbb{Q}^\ast\cup \mathbb{Q}(\upsilon)$ we define the relation
\begin{center} $q_1<_{\textsl{R}_1} q_2 \;\Longleftrightarrow\; g^{-1}(q_1)<_{\textsl{R}_1}g^{-1}(q_2).$
\end{center}
So, $\{x_q\}_{q\in \mathbb{Q}^\ast}\subseteq X$, where $X$ is a topological space, can be considered as an $ \textsl{R}_1$-net and consequently we can define, for $x_0\in X$,
$R_1$-$\lim_{q\in \mathbb{Q}^\ast} x_q=x_0$ iff for any neighborhood $V$ of $x_0,$ there exists $n_0\equiv n_0(V)\in \nat$ such that $x_q\in V$ for every $q\in \mathbb{Q}^\ast$ with $\min\{-\max dom^-(g^{-1}(q)),\min dom^+(g^{-1}(q))\}\geq n_0$.

Observe that $g(w_1\star w_2)=g(w_1)+g(w_2)$ for every $w_1<_{\textsl{R}_1}w_2\in \widetilde{L}(\Sigma\cup\{\upsilon\},\vec{k}).$ So, if

$\vec{q}=(q_n)_{n\in \nat}\in\mathbb{Q}^{\infty}(\upsilon)=\{(q_n)_{n\in\nat} : q_n\in \mathbb{Q}(\upsilon)$
and  $q_n<_{\textsl{R}_1}q_{n+1}\}$,
\newline
then the set of the extractions of $\vec{q}$ is

$\widetilde{EV}^{\infty}(\vec{q}) = \{\vec{r}=(r_n)_{n\in\nat} \in \mathbb{Q}^{\infty}(\upsilon) : r_n=g(u_n)$ for $(u_n)_{n\in\nat}\in \widetilde{EV}^{\infty}((g^{-1}(q_n))_{n\in\nat})\}$ and
\newline
the set of all the extracted rationals of $\vec{q}$ is

$\widetilde{E}(\vec{q})=\{q\in FS[(q_n(i_n,j_n))_{n\in \nat}] : ((i_n,j_n))_{n\in \nat}\subseteq \nat\times\nat$ with $1\leq i_n,j_n\leq n\} =$
\begin{center} $=\{g(w) : w\in \widetilde{E}((g^{-1}(q_n))_{n\in\nat})\}.\;\;\;\;\;\;\;\;\;\;\; \;\;\;\;\;\;\;\;\;\;\;\; \;\;\;\;\;\;\;\;\;\;\;\;\;\;\;\;\;\;\;\;\;\;\;\; \;\;\;\;\;\;\;\;\;\;\;\; \;\;$\end{center}

\noindent Of course, $\{x_q\}_{q\in \widetilde{E}(\vec{q})}$ is an $ \textsl{R}_1$-subnet of $\{x_q\}_{q\in \mathbb{Q}^\ast}$.

Hence, via the function $g$, all the presented results relating to $\omega$-$\mathbb{Z}^\ast$-located words give analogous results for the rational numbers. For example Theorems~\ref{3},~\ref{114} give the following:

\begin{thm}\label{16}
For every net $\{x_ q\}_{ q\in \mathbb{Q}^\ast}$ in a compact metric space $(X,d)$ and $\vec{q}=(q_n)_{n\in\nat} \in \mathbb{Q}^{\infty}(\upsilon)$ there exist an extraction $\vec{r}=(r_n)_{n\in\nat}$ of $\vec{q}$ and $x_0\in X$ such that
\begin{center}
$\text{$R_1$-}\lim_{q\in FS[(r_n(i_n,j_n))_{n\in \nat}]}x_q=x_0$ (in particular $x_{r_n(i_n,j_n)}\rightarrow x_0$),
\end{center}
uniformly for every $((i_n,j_n))_{n\in \nat}\subseteq \nat\times\nat$ with $1\leq i_n,j_n\leq n$.
\end{thm}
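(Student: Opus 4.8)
The plan is to transfer Theorem~\ref{3} from words to rationals through the bijection $g$, which is exactly what the preceding discussion was designed to enable. Given a net $\{x_q\}_{q\in\mathbb{Q}^\ast}$ and $\vec{q}=(q_n)_{n\in\nat}\in\mathbb{Q}^\infty(\upsilon)$, I would first pull everything back to $\omega$-$\mathbb{Z}^\ast$-located words. Since $g:\widetilde{L}(\Sigma,\vec{k})\to\mathbb{Q}^\ast$ is one-to-one and onto, I define a word-indexed net $\{y_w\}_{w\in\widetilde{L}(\Sigma,\vec{k})}\subseteq X$ by $y_w=x_{g(w)}$, and I set $\vec{w}=(g^{-1}(q_n))_{n\in\nat}$. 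Because each $q_n\in\mathbb{Q}(\upsilon)$ and $q_n<_{\textsl{R}_1}q_{n+1}$ (the order on $\mathbb{Q}^\ast\cup\mathbb{Q}(\upsilon)$ being defined precisely so as to mirror $<_{\textsl{R}_1}$ on words), the sequence $\vec{w}$ lies in $\widetilde{L}^\infty(\Sigma,\vec{k};\upsilon)$.

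Next I apply Theorem~\ref{3} to the net $\{y_w\}_{w\in\widetilde{L}(\Sigma,\vec{k})}$ and the sequence $\vec{w}$. This produces an extraction $\vec{u}=(u_n)_{n\in\nat}\prec\vec{w}$ and a point $x_0\in X$ with $R_1$-$\lim_{w\in\widetilde{E}(\vec{u})}y_w=x_0$. By Remark~\ref{rem2}(2), equivalently Proposition~\ref{4}, this $R_1$-limit amounts to uniform convergence $y_{u_n(p_n,q_n)}\to x_0$ over all choices of substitution parameters with $1\le p_n\le k_n$, $1\le q_n\le k_{-n}$; for the distinguished $\vec{k}$ of Example~\ref{exa1}(1) one has $k_n=k_{-n}=n$, which is exactly the range $1\le i_n,j_n\le n$ appearing in the statement.

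Finally I push the conclusion back to $\mathbb{Q}^\ast$. I set $\vec{r}=(r_n)_{n\in\nat}$ with $r_n=g(u_n)$; since $\vec{u}\prec\vec{w}$, i.e. $\vec{u}\in\widetilde{EV}^\infty(\vec{w})$, the definition of $\widetilde{EV}^\infty(\vec{q})$ via $g$ shows that $\vec{r}$ is an extraction of $\vec{q}$. The additivity $g(w_1\star w_2)=g(w_1)+g(w_2)$ for $w_1<_{\textsl{R}_1}w_2$, together with the compatibility of substitution with $g$ recorded in the definition of $q(i,j)$, yields $g(\widetilde{E}(\vec{u}))=\widetilde{E}(\vec{r})=FS[(r_n(i_n,j_n))_{n\in\nat}]$, exactly as displayed just before the theorem. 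Since $y_w=x_{g(w)}$ and the two notions of $R_1$-limit are both defined through the quantity $\min\{-\max dom^-,\min dom^+\}$ of the underlying word $g^{-1}(q)$, the word-limit $R_1$-$\lim_{w\in\widetilde{E}(\vec{u})}y_w=x_0$ becomes literally the asserted $R_1$-$\lim_{q\in FS[(r_n(i_n,j_n))_{n\in\nat}]}x_q=x_0$, with the uniform convergence $x_{r_n(i_n,j_n)}\to x_0$ carried along from the previous paragraph.

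The routine work, and the only place that needs genuine care, is checking that $g$ is an isomorphism of \emph{all} the relevant structures and not merely a set bijection: that $g^{-1}$ sends $\mathbb{Q}^\infty(\upsilon)$ into $\widetilde{L}^\infty(\Sigma,\vec{k};\upsilon)$ (so that its images indeed have the required variable pattern), that it intertwines $<_{\textsl{R}_1}$, the substitution $w(p,q)$, and the concatenation $\star$ with the corresponding operations on rationals, and---a small bookkeeping point---that the index transposition implicit in $q(i,j)=g(T_{(j,i)}(w))$ is tracked correctly against the ranges $1\le i_n,j_n\le n$. Once these identifications are in place, the theorem is a direct translation of Theorem~\ref{3} and no new dynamical input is required.
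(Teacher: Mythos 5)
Your proposal is correct and follows exactly the route the paper intends: the paper offers no separate proof of Theorem~\ref{16} beyond the remark that, via the identification $g$ between $\widetilde{L}(\Sigma,\vec{k})$ and $\mathbb{Q}^\ast$ set up in the preceding paragraphs, the statement is a direct translation of Theorem~\ref{3} (with Proposition~\ref{4} supplying the uniform-convergence reformulation). Your additional care about $g$ intertwining $<_{\textsl{R}_1}$, substitution, and concatenation is precisely the bookkeeping the paper leaves implicit.
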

We call a family $\{T^q\}_{q\in {\mathbb{Q}^\ast}}$ of continuous functions of a topological space $X$ into itself a  $\mathbb{Q}^\ast$-system of $X$ if $T^{q_1}T^{q_2}=T^{q_1 + q_2}$ for $q_1<_{\textsl{R}_1}q_2$.

\begin{thm}\label{17}
Let $\{T_1^q\}_{q\in {\mathbb{Q}^\ast}},\ldots,\{T_m^q\}_{q\in {\mathbb{Q}^\ast}}$ be $m$ $\mathbb{Q}^\ast$-systems of transformations of a compact metric space $X,$ all contained in a commutative group $G$ of homeomorphisms of $X$ and let $\vec{q}\in\mathbb{Q}^{\infty}(\upsilon)$. Then, there exist $x_0\in X$ and an extraction $\vec{r}=(r_n)_{n\in\nat}$ of $\vec{q}$ such that, for every $1\leq i\leq m$,
\begin{center}
$\text{$R_1$-}\lim_{q\in FS[(r_n(i_n,j_n))_{n\in \nat}]}T_i^q (x_0)=x_0$ (in particular, $T_i^{r_n(i_n,j_n)}(x_0)\rightarrow x_0$),
\end{center}
  uniformly for all $((i_n,j_n))_{n\in \nat}\subseteq \nat\times\nat$ with $1\leq i_n,j_n\leq n$.

Moreover, in case $(X,G)$ is minimal, the set of such points $x_0$ is a dense subset of $X.$
\end{thm}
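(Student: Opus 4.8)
The plan is to transport Theorem~\ref{114} from words to rationals through the bijection $g$. First I would convert each given $\mathbb{Q}^\ast$-system into a word-indexed system: for $1\le i\le m$ and $w\in\widetilde{L}(\Sigma,\vec{k})$ set $\widehat{T}_i^{\,w}=T_i^{g(w)}$. Using the additivity $g(w_1\star w_2)=g(w_1)+g(w_2)$ recorded before the theorem (valid for $w_1<_{\textsl{R}_1}w_2$) together with the defining relation of a $\mathbb{Q}^\ast$-system, one checks at once that
$$\widehat{T}_i^{\,w_1}\widehat{T}_i^{\,w_2}=T_i^{g(w_1)}T_i^{g(w_2)}=T_i^{g(w_1)+g(w_2)}=T_i^{g(w_1\star w_2)}=\widehat{T}_i^{\,w_1\star w_2}$$
whenever $w_1<_{\textsl{R}_1}w_2$, so each $\{\widehat{T}_i^{\,w}\}_{w\in\widetilde{L}(\Sigma,\vec{k})}$ is an $\widetilde{L}(\Sigma,\vec{k})$-system. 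Since $g$ merely relabels the exponent and each $T_i^q$ lies in $G$, these $m$ systems are again contained in the same commutative group $G$ of homeomorphisms.

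Next I would set $\vec{w}=(g^{-1}(q_n))_{n\in\nat}$, which lies in $\widetilde{L}^{\infty}(\Sigma,\vec{k};\upsilon)$ because $\vec{q}\in\mathbb{Q}^{\infty}(\upsilon)$, and apply Theorem~\ref{114} to the systems $\{\widehat{T}_i^{\,w}\}$ and to $\vec{w}$. This yields a point $x_0\in X$ and an extraction $\vec{u}=(u_n)_{n\in\nat}\prec\vec{w}$ with
$$\text{$R_1$-}\lim_{w\in\widetilde{E}(\vec{u})}\widehat{T}_i^{\,w}(x_0)=x_0\quad\text{for every }1\le i\le m,$$
and, when $(X,G)$ is minimal, the collection of such $x_0$ dense in $X$. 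Putting $\vec{r}=(g(u_n))_{n\in\nat}$ gives an extraction of $\vec{q}$ in the sense of $\widetilde{EV}^{\infty}(\vec{q})$, and by the identity $\widetilde{E}(\vec{r})=\{g(w):w\in\widetilde{E}(\vec{u})\}$ recorded before the theorem, the image under $g$ of the net $\{\widehat{T}_i^{\,w}(x_0)\}_{w\in\widetilde{E}(\vec{u})}$ is exactly the net $\{T_i^{q}(x_0)\}_{q\in\widetilde{E}(\vec{r})}$.

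It then remains to match the two notions of $R_1$-convergence. Because $\widehat{T}_i^{\,w}(x_0)=T_i^{g(w)}(x_0)$ and the $R_1$-convergence of a net $\{x_q\}_{q\in\mathbb{Q}^\ast}$ is defined through the quantity $\min\{-\max dom^-(g^{-1}(q)),\min dom^+(g^{-1}(q))\}$, the word-limit displayed above is literally the rational limit $\text{$R_1$-}\lim_{q\in\widetilde{E}(\vec{r})}T_i^{q}(x_0)=x_0$; recalling $\widetilde{E}(\vec{r})=\{q\in FS[(r_n(i_n,j_n))_{n\in\nat}]:1\le i_n,j_n\le n\}$, this is precisely the asserted statement, uniform over all $((i_n,j_n))_{n\in\nat}$. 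The pointwise clause $T_i^{r_n(i_n,j_n)}(x_0)\to x_0$ follows by transporting Remark~\ref{rem2}(2) (equivalently Proposition~\ref{4}) through $g$, which carries each substituted word $u_n(p,q)$ to a substituted rational with the same admissible range $1\le p,q\le n$, while the density assertion passes over verbatim from the corresponding clause of Theorem~\ref{114}. The only genuine work is the bookkeeping—checking that the system relation, the extraction order $\prec$, the set $\widetilde{E}$, and the $R_1$-filter are each carried faithfully by $g$—after which no new dynamics needs to be proved; this translation step, rather than any dynamical argument, is where I expect all the care to be required.
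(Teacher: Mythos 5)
Your proposal is correct and is exactly the argument the paper intends: the paper gives no separate proof of Theorem~\ref{17}, stating only that it follows from Theorem~\ref{114} ``via the function $g$,'' using the transport structure (additivity of $g$ over $\star$, the induced $<_{\textsl{R}_1}$ on $\mathbb{Q}^\ast$, extractions of $\vec{q}$, and $\widetilde{E}(\vec{q})$) set up just before the statement. Your write-up simply makes that translation explicit, so it matches the paper's approach.
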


\subsection*{Semigroup $(\zat,+)$}

As we described in Example~\ref{exa1}(2), for a given increasing sequence $(k_n)_{n\in \nat}\subseteq \nat$ with $k_n \geq 2$, the set $\zat^\ast$ of the nonzero integer numbers can be identified with a set $L(\Sigma,\vec{k})$ of $\omega$-located words, via the function
\begin{center}
$g : L(\Sigma,\vec{k}) \rightarrow \zat^\ast$, with $\;g(z_{s_1}\ldots z_{s_l})=\sum^{l}_{i=1 }z_{s_i}(-1)^{s_i-1}l_{s_i-1}$
\end{center}
where $l_0=1$ and $l_s=k_1\ldots k_s ,$ for $s>0$.

We extend the function $g$ to the set $L(\Sigma,\vec{k};\upsilon)$ of variable $\omega$-located words corresponding to each $w=z_{s_1}\ldots z_{s_l}\in L(\Sigma,\vec{k};\upsilon)$ a function $z=g(w)$ which sends every $i\in \nat$ with $1\leq i\leq k_{\min dom(w)},$ to
\begin{center}
$z(i)=g(T_{i}(w))=\sum_{s\in C}z_s(-1)^{s-1}l_{s-1} + \sum_{s\in V} i (-1)^{s-1}l_{s-1}$.
\end{center}
where $C=\{s\in dom(w): z_s\in \Sigma\}$ and $V=\{s\in dom(w):\;z_s=\upsilon\}$.

Let $\zat(\upsilon) = g(L(\Sigma,\vec{k};\upsilon))$. Then the extended  function
\begin{center}
$g:L(\Sigma\cup\{\upsilon\},\vec{k})\rightarrow \zat^\ast\cup \zat(\upsilon)$
\end{center}
is one-to-one and onto. For $z_1,z_2\in \zat^\ast\cup \zat(\upsilon)$ we define the relation
\begin{center} $z_1 <_{\textsl{R}_2} z_2 \;\Longleftrightarrow\; g^{-1}(z_1)<_{\textsl{R}_2}g^{-1}(z_2).$
\end{center}
So, $\{x_z\}_{z\in \zat^\ast}\subseteq X$, where $X$ is a topological space, can be considered as an $ \textsl{R}_2$-net and consequently we can define, for $x_0\in X$,
$R_2$-$\lim_{z\in \zat^\ast} x_z=x_0$ iff for any neighborhood $V$ of $x_0,$ there exists $n_0\equiv n_0(V)\in \nat$ such that $x_z\in V$ for every $z\in \zat^\ast$ with $\min dom(g^{-1}(z))\geq n_0$.

Observe that $g(w_1\star w_2)=g(w_1)+g(w_2)$ for every $w_1<_{\textsl{R}_2}w_2\in L(\Sigma\cup\{\upsilon\},\vec{k}).$ So, if

$\vec{z}=(z_n)_{n\in \nat}\in\zat^{\infty}(\upsilon)=\{(z_n)_{n\in\nat} : z_n\in \zat(\upsilon)$
and  $z_n<_{\textsl{R}_2}z_{n+1}\}$,
\newline
then the set of the extractions of $\vec{z}$ is

$EV^{\infty}(\vec{z}) = \{\vec{v}=(v_n)_{n\in\nat} \in \zat^{\infty}(\upsilon) : v_n=g(u_n)$ for $(u_n)_{n\in\nat}\in EV^{\infty}((g^{-1}(z_n))_{n\in\nat})\}$ and
\newline
the set of all the extracted integers of $\vec{z}$ is

$E(\vec{z})=\{z\in FS[(z_n(i_n))_{n\in \nat}] : (i_n)_{n\in \nat}\subseteq \nat$ with $1\leq i_n\leq k_n\} =$

\begin{center} $=\{g(w) : w\in E((g^{-1}(z_n))_{n\in\nat})\}.\;\;\;\;\;\;\;\;\;\;\;\;\;\;\;\;\;\;\;\; \;\;\;\;\;\;\;\;\;\;\;\;\;\;\;\;\;\;\;\;\;\;\;\;\;\;\;\;\;\;\;\;\;\;\;\;\;\;\;\;\;$\end{center}

\noindent Of course, $\{x_z\}_{z\in E(\vec{z})}$ is an $ \textsl{R}_2$-subnet of $\{x_z\}_{z\in \zat^\ast}$.

Hence, via the function $g$, all the presented results relating to $\omega$-located words give analogous results for the integers. For example Theorems~\ref{3},~\ref{5} give the following.

\begin{thm}\label{18}
For every net $\{x_ z\}_{ z\in \zat^\ast}$ in a compact metric space $(X,d)$,
and $\vec{z}=(z_n)_{n\in\nat} \in \zat^{\infty}(\upsilon)$ there exist an extraction $\vec{v}=(v_n)_{n\in\nat}$ of $\vec{z}$ and $x_0\in X$ such that
\begin{center}
$\text{$R_2$-}\lim_{z\in FS[(v_n(i_n))_{n\in \nat}]}x_z=x_0$ (in particular $x_{v_n(i_n)}\rightarrow x_0$),
\end{center}
uniformly for all $(i_n)_{n\in \nat}\subseteq \nat$ with $1\leq i_n\leq k_n$.
\end{thm}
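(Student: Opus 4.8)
The plan is to transport the entire situation through the bijection $g$ and then invoke the net form of the partition theorem, Theorem~\ref{3}, in its $L(\Sigma,\vec{k})$/$R_2$ version. First I would push the given net forward by setting $\widetilde{x}_w=x_{g(w)}$ for $w\in L(\Sigma,\vec{k})$, which defines a net $\{\widetilde{x}_w\}_{w\in L(\Sigma,\vec{k})}$ in $(X,d)$. At the same time I would pull $\vec{z}$ back to $\vec{w}=(g^{-1}(z_n))_{n\in\nat}$. This lies in $L^{\infty}(\Sigma,\vec{k};\upsilon)$ because $g^{-1}$ carries $\zat(\upsilon)$ onto $L(\Sigma,\vec{k};\upsilon)$ and, by the very definition of $<_{\textsl{R}_2}$ on $\zat^\ast\cup\zat(\upsilon)$, preserves the order, so that $z_n<_{\textsl{R}_2}z_{n+1}$ forces $g^{-1}(z_n)<_{\textsl{R}_2}g^{-1}(z_{n+1})$.

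Next I would apply Theorem~\ref{3} to the net $\{\widetilde{x}_w\}$ together with the sequence $\vec{w}$, obtaining an extraction $\vec{u}\prec\vec{w}$ and a point $x_0\in X$ with $R_2$-$\lim_{w\in E(\vec{u})}\widetilde{x}_w=x_0$. Setting $v_n=g(u_n)$, the sequence $\vec{v}=(v_n)_{n\in\nat}$ belongs to $EV^{\infty}(\vec{z})$, i.e. it is an extraction of $\vec{z}$, since $g$ is a bijection intertwining the two notions of extraction already recorded before the statement. What remains is to rewrite the word-limit as the asserted limit over finite sums of integers.

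The key identity, also recorded just above the statement, is that $\{g(w):w\in E(\vec{u})\}=E(\vec{z})$ equals the union of the sets $FS[(v_n(i_n))_{n\in\nat}]$ taken over all sequences $(i_n)_{n\in\nat}\subseteq\nat$ with $1\leq i_n\leq k_n$; this rests on the additivity $g(w_1\star w_2)=g(w_1)+g(w_2)$, so that an extracted word $u_{n_1}(i_{n_1})\star\ldots\star u_{n_\lambda}(i_{n_\lambda})$ is sent to the finite sum $v_{n_1}(i_{n_1})+\ldots+v_{n_\lambda}(i_{n_\lambda})$. I would then check that the index condition $\min dom(w)\geq n_0$ defining the $R_2$-convergence of $\{\widetilde{x}_w\}$ matches, under $g$, the requirement that the smallest index occurring in the finite sum be at least $n_0$; since one single word-limit over $E(\vec{u})$ ranges over all substitution choices at once, the convergence it yields in $\zat^\ast$ is automatically uniform in the sequences $(i_n)$. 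Finally, the parenthetical pointwise assertion $x_{v_n(i_n)}\to x_0$, uniformly in $(i_n)$, is exactly Remark~\ref{rem2}(2) transported through $g$.

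The only real obstacle is bookkeeping: confirming that $R_2$-convergence over $E(\vec{u})$ translates without loss into \emph{uniform} convergence over the sets $FS[(v_n(i_n))_{n\in\nat}]$, that is, that the uniformity in the substitution parameters $(i_n)$ is genuinely free rather than something to be re-established. This is guaranteed because $E(\vec{u})$ already contains every substituted and concatenated word $u_{n_1}(i_{n_1})\star\ldots\star u_{n_\lambda}(i_{n_\lambda})$, so no argument beyond the dictionary between words and integers is required.
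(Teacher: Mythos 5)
Your proposal is correct and is essentially the proof the paper intends: the paper gives no separate argument for Theorem~\ref{18} beyond the remark that, via the bijection $g$ and its additivity $g(w_1\star w_2)=g(w_1)+g(w_2)$, Theorem~\ref{3} transfers directly, which is exactly your transport-and-translate argument. Your handling of the uniformity in $(i_n)$ via the fact that $E(\vec{u})$ already contains all substituted concatenations (i.e. the content of Proposition~\ref{4} and Remark~\ref{rem2}(2)) matches the paper's dictionary as well.
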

We call a family $\{T^z\}_{z\in \zat^\ast}$ of continuous functions of a topological space $X$ into itself a  $\zat^\ast$-system of $X$ if $T^{z_1}T^{z_2}=T^{z_1 + z_2}$ for $z_1<_{\textsl{R}_2}z_2$.

\begin{thm}\label{19}
Let $\{T^z\}_{z\in \zat^\ast}$ be a $\zat^\ast$-system of continuous maps of a compact metric space $(X, d)$, $\vec{z}=(z_n)_{n\in \nat}\in \zat^{\infty}(\upsilon)$ and $y\in X$. Then there exist an extraction $\vec{v}=(v_n)_{n\in\nat}$ of $\vec{z}$ and $x_0\in X$ such that
\begin{center}
$\text{$R_2$-}\lim_{z\in FS[(v_n(i_n))_{n\in \nat}]} T^z (y)=x_0,$
$\text{$R_2$-}\lim_{z\in FS[(v_n(i_n))_{n\in \nat}]}T^z (x_0)=x_0$
\end{center}
uniformly for all $(i_n)_{n\in \nat}\subseteq \nat$ with $1\leq i_n\leq k_n$.
\end{thm}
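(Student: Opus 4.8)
The plan is to transport Theorem~\ref{5} from $\omega$-located words to the integers through the bijection $g$, which is precisely what the preceding discussion of the semigroup $(\zat,+)$ was set up to allow. First I would turn the given $\zat^\ast$-system into an $L(\Sigma,\vec{k})$-system by setting $S^w=T^{g(w)}$ for $w\in L(\Sigma,\vec{k})$. To check that $\{S^w\}_{w\in L(\Sigma,\vec{k})}$ is indeed an $L(\Sigma,\vec{k})$-system, take $w_1<_{\textsl{R}_2}w_2$; then $g(w_1)<_{\textsl{R}_2}g(w_2)$ by the very definition of $<_{\textsl{R}_2}$ on $\zat^\ast\cup\zat(\upsilon)$, and using that $\{T^z\}$ is a $\zat^\ast$-system together with the additivity $g(w_1\star w_2)=g(w_1)+g(w_2)$ one gets
\[
S^{w_1}S^{w_2}=T^{g(w_1)}T^{g(w_2)}=T^{g(w_1)+g(w_2)}=T^{g(w_1\star w_2)}=S^{w_1\star w_2}.
\]

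Next I would translate the data. The sequence $\vec{z}=(z_n)_{n\in\nat}\in\zat^{\infty}(\upsilon)$ corresponds to $\vec{w}=(g^{-1}(z_n))_{n\in\nat}\in L^{\infty}(\Sigma,\vec{k};\upsilon)$, and Theorem~\ref{5}, applied to the system $\{S^w\}$, the sequence $\vec{w}$ and the point $y$, yields an extraction $\vec{u}=(u_n)_{n\in\nat}\prec\vec{w}$ and a point $x_0\in X$ with $\text{$R_2$-}\lim_{w\in E(\vec{u})}S^w(y)=x_0$ and $\text{$R_2$-}\lim_{w\in E(\vec{u})}S^w(x_0)=x_0$.

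Finally I would translate the conclusion back. Putting $v_n=g(u_n)$ gives $\vec{v}=(v_n)_{n\in\nat}\in EV^{\infty}(\vec{z})$, i.e. an extraction of $\vec{z}$, and $g$ carries $E(\vec{u})$ bijectively onto $E(\vec{v})=\{z\in FS[(v_n(i_n))_{n\in\nat}]:(i_n)_{n\in\nat}\subseteq\nat,\ 1\leq i_n\leq k_n\}$, since $g^{-1}(v_n)=u_n$. Because $\min dom(g^{-1}(z))=\min dom(w)$ whenever $z=g(w)$, the index condition $\min dom(w)\geq n_0$ governing $\textsl{R}_2$-convergence of word-nets matches the one defining $\text{$R_2$-}\lim_{z\in\zat^\ast}$; hence $S^w(y)=T^{g(w)}(y)$ and $S^w(x_0)=T^{g(w)}(x_0)$ turn the two limits above into the two asserted limits over $FS[(v_n(i_n))_{n\in\nat}]$, uniformly in all $(i_n)$ with $1\leq i_n\leq k_n$, the uniformity being encoded in the fact that the single indexing set $E(\vec{v})$ is the union over all such $(i_n)$.

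I do not expect a genuine obstacle here: the whole content is that $g$ is an isomorphism of the relevant combinatorial structures, sending $\star$ to $+$, $<_{\textsl{R}_2}$ to $<_{\textsl{R}_2}$, and extractions to extractions while preserving the $\min dom$ statistic that controls $R_2$-convergence. The only point requiring care is to invoke each of these three correspondences in the right place; in particular, the additivity $g(w_1\star w_2)=g(w_1)+g(w_2)$ is exactly what is needed to verify the system identity in the first step, and it is this verification (rather than any analytic difficulty) that carries the argument.
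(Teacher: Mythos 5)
Your proposal is correct and follows exactly the route the paper intends: the paper states Theorem~\ref{19} as an immediate consequence of Theorem~\ref{5} transported through the bijection $g$ (using $g(w_1\star w_2)=g(w_1)+g(w_2)$ and the induced $<_{\textsl{R}_2}$), and your argument simply spells out that transfer in detail.
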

As we described in Example~\ref{exa1}(3), the set of natural numbers can be identified with a set $L(\Sigma,\vec{k})$ and consequently all the presented results relating to $\omega$-located words give analogous recurrence results for the natural numbers.

We will now give some applications of the previously mentioned recurrence results for systems or nets indexed by words to systems or nets indexed by an arbitrary semigroup. For simplicity we will present only the case of commutative semigroups.

Let $(S,+)$ be a semigroup and $(y_{l,n})_{n \in \zat^\ast}\subseteq S$ for every $l\in \zat^\ast$. Setting $\Sigma=\{\alpha_n:n\in\mathbb{Z}^\ast\}$, where $\alpha_n=n$ for $n\in \zat^\ast$  and $\vec{k}=(k_n)_{n \in \mathbb{Z}^\ast}\subseteq\nat$, where $(k_n)_{n \in \nat}$ and $(k_{-n})_{n \in \nat}$ are increasing sequences, we define the function \begin{center}$\varphi:\widetilde{L}(\Sigma,\vec{k})\rightarrow S \;\;\text{with}\;\; \varphi(w_{n_1}\ldots w_{n_m})=\sum^{m}_{i=1}y_{w_{n_i},n_i}.$
\end{center}
We extend the function $\varphi$ to the set $\widetilde{L}(\Sigma,\vec{k};\upsilon)$ of variable words corresponding to each $w=w_{n_1}\ldots w_{n_m}\in \widetilde{L}(\Sigma,\vec{k};\upsilon)$ a function $s=\varphi(w)$ which sends every $(i,j)\in \nat\times\nat$ with $1\leq j\leq -\max dom^-(w),$ $1\leq i\leq \min dom^+(w),$ to $s(i,j)=\varphi(T_{(i,j)}(w))\in S$. In case $(S,+)$ is a commutative semigroup
\begin{center}
$s(i,j)=\varphi(w)((i,j))=\sum_{t \in C}y_{w_{t},t}+\sum_{t \in V^+}y_{i,t}+\sum_{t \in V^-}y_{-j,t},$,
\end{center}
where $C=\{n\in dom(w):w_n\in \Sigma\},$ $V^-=\{n\in dom^-(w): w_n=\upsilon\}$ and $V^+=\{n\in dom^+(w): w_n=\upsilon\}.$

For a subset $\{x_s : s\in S\}$ of a topological space $X$ we can consider the $ \textsl{R}_1$-net  $\{x_{\varphi(w)}\}_{w\in \widetilde{L}(\Sigma,\vec{k})}$ in $X$. Let $\vec{w}=(w_n)_{n\in\nat}\in \widetilde{L}^{\infty} (\Sigma, \vec{k} ; \upsilon)$ such that $R_1$-$\lim_{w\in \widetilde{E}(\vec{w})}x_{\varphi(w)}=x_0$, for $x_0\in X$. Then setting, for every $n\in\nat$,
\begin{center}
$s_n=\varphi(w_n) : \{1,\ldots,k_n\}\times \{1,\ldots,k_{-n}\}\rightarrow X $ with $s_n(i,j)=\sum_{t \in C_n}y_{w_{t},t}+\sum_{t \in V^+_n}y_{i,t}+\sum_{t \in V^-_n}y_{-j,t},$
\end{center}
we have that
\begin{center}
 $R_1$-$\lim_{s\in FS[ (s_n(i_n,j_n))_{n\in \nat}]}x_{s}=x_0$
\end{center}
uniformly for all $((i_n,j_n))_{n\in \nat}\subseteq \nat\times\nat$ with $1\leq i_n\leq k_n$, $1\leq j_n\leq k_{-n}$. We write $R_1$-$\lim_{s\in FS[ (s_n(i_n,j_n))_{n\in \nat}]}x_{s}=x_0$ if and only if for any neighborhood $V$ of $x_0,$ there exists $n_0\equiv n_0(V)\in \nat$ such that $x_s\in V$ for every $s\in FS\big[ \big(s_n(i_n,j_n)\big)_{n\geq n_0}\big]$.

Hence, via the function $\varphi$, all the presented results related to $\omega$-$\mathbb{Z}^\ast$-located words give analogous results for nets indexed by an arbitrary semigroup. For example Theorems~\ref{3},~\ref{114} give the following.

\begin{thm}\label{20}
Let $(S,+)$ be a commutative semigroup and $(y_{l,n})_{n \in \zat^\ast}\subseteq S$ for every $l\in \zat^\ast$. For every subset $\{x_s : s\in S\}$ of a compact metric space $(X,d)$ there exist $x_0\in X$ and, for every $n\in\nat$, functions
$s_n : \{1,\ldots,k_n\}\times \{1,\ldots,k_{-n}\}\rightarrow X$ with \begin{center} $s_n(i,j)=\sum_{t \in C_n}y_{w_{t},t}+\sum_{t \in V^+_n}y_{i,t}+\sum_{t \in V^-_n}y_{-j,t},$\end{center}
where $C_n=C_n^-\cup C_n^+\subseteq \zat^\ast$ with $\max C_{n+1}^-<\min C_{n}^-<\max C_n^+<\min C^+_{n+1},$ $ V^+_n\subseteq \nat$ with $\max V^+_n<\min V^+_{n+1}$ and $ V^-_n\subseteq \zat^-$ with $\min V^-_n >\max V^-_{n+1}$, such that
\begin{center}
$R_1$-$\lim_{s\in FS[ (s_n(i_n,j_n))_{n\in \nat}]}x_{s}=x_0$ (in particular $x_{s_n(i_n,j_n)}\rightarrow x_0$),
\end{center}
uniformly for every $((i_n,j_n))_{n\in \nat}\subseteq \nat\times\nat$ with $1\leq i_n\leq k_n,$ $1\leq j_n\leq k_{-n}.$
\end{thm}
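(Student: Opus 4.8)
The plan is to transport the family $\{x_s:s\in S\}$ to a net indexed by $\omega$-$\mathbb{Z}^\ast$-located words by means of the map $\varphi$ and then to invoke Theorem~\ref{3}. I would keep the alphabet $\Sigma=\{\alpha_n:n\in\mathbb{Z}^\ast\}$ with $\alpha_n=n$ and the dominating sequence $\vec{k}$ as fixed in the statement, and fix once and for all an arbitrary $\vec{w}=(w_n)_{n\in\nat}\in\widetilde{L}^\infty(\Sigma,\vec{k};\upsilon)$; note that by the very definition of $\widetilde{L}^\infty(\Sigma,\vec{k};\upsilon)$ each $w_n$ lies in $\widetilde{L}_0(\Sigma,\vec{k};\upsilon)$, hence carries a variable in both its negative and its positive range. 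Putting $x'_w:=x_{\varphi(w)}$ for $w\in\widetilde{L}(\Sigma,\vec{k})$ yields an $\textsl{R}_1$-net $\{x'_w\}_{w\in\widetilde{L}(\Sigma,\vec{k})}$ in the compact metric space $X$, to which Theorem~\ref{3} applies.

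The one algebraic fact to record at the outset is that $\varphi$ is additive on concatenations of words with disjoint domains: $\varphi(w_1\star w_2)=\varphi(w_1)+\varphi(w_2)$ whenever $w_1<_{\textsl{R}_1}w_2$. This is immediate from $\varphi(w_{n_1}\ldots w_{n_m})=\sum_{i=1}^{m}y_{w_{n_i},n_i}$ once the commutativity of $(S,+)$ is used to split the sum over $dom(w_1)\cup dom(w_2)$. In the same breath I would observe that for a variable word $u$ the substituted word $u(i,j)$ is sent by $\varphi$ exactly to $\sum_{t\in C}y_{w_t,t}+\sum_{t\in V^+}y_{i,t}+\sum_{t\in V^-}y_{-j,t}$, because the substitution replaces $\upsilon$ by $\alpha_i=i$ on the positive coordinates and by $\alpha_{-j}=-j$ on the negative coordinates; this is precisely the prescribed form of the functions $s_n$.

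Applying Theorem~\ref{3} to the net $\{x'_w\}$ produces an extraction $\vec{u}=(u_n)_{n\in\nat}\prec\vec{w}$ and a point $x_0\in X$ with $\text{$R_1$-}\lim_{w\in\widetilde{E}(\vec{u})}x_{\varphi(w)}=x_0$. I would then define $s_n:=\varphi(u_n)$, regarded as a map on $\{1,\ldots,k_n\}\times\{1,\ldots,k_{-n}\}$ via $s_n(i,j):=\varphi(u_n(i,j))$, and read off $C_n,V_n^\pm$ as the constant and variable coordinates of $u_n$ in the negative and positive ranges. The required inequalities $\max C_{n+1}^-<\min C_n^-<\max C_n^+<\min C_{n+1}^+$ together with the monotonicity of the $V_n^\pm$ are nothing but the nesting of domains imposed by $u_n<_{\textsl{R}_1}u_{n+1}$: the negative coordinates of $u_{n+1}$ lie below, and its positive coordinates above, those of $u_n$.

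It remains to translate the $\textsl{R}_1$-limit over $\widetilde{E}(\vec{u})$ into the $FS$-limit over $FS[(s_n(i_n,j_n))_{n\in\nat}]$. Every $w\in\widetilde{E}(\vec{u})$ has the form $u_{n_1}(i_{n_1},j_{n_1})\star\cdots\star u_{n_\lambda}(i_{n_\lambda},j_{n_\lambda})$, so additivity gives $\varphi(w)=\sum_{r=1}^{\lambda}s_{n_r}(i_{n_r},j_{n_r})\in FS[(s_n(i_n,j_n))_{n\in\nat}]$, and conversely $\varphi$ maps $\widetilde{E}(\vec{u})$ onto this $FS$-set; moreover the tail condition $\min\{-\max dom^-(w),\min dom^+(w)\}\geq n_0$ defining $\textsl{R}_1$-convergence corresponds exactly to discarding the initial blocks, i.e. to $\min F\geq n_0$. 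The uniform dependence on the choices $(i_n,j_n)$ with $1\leq i_n\leq k_n$, $1\leq j_n\leq k_{-n}$ then follows from Proposition~\ref{4}. The computations are routine; the only point requiring genuine care is this last bookkeeping step, namely matching the word-theoretic tail index with the $FS$-tail and checking that uniformity over substitutions becomes uniformity over $((i_n,j_n))_{n\in\nat}$ — and it is here, rather than in any analytic estimate, that I expect the main (entirely elementary) work to lie.
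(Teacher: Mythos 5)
Your proposal is correct and follows essentially the same route as the paper: the authors also transport $\{x_s\}$ to the $\textsl{R}_1$-net $\{x_{\varphi(w)}\}_{w\in\widetilde{L}(\Sigma,\vec{k})}$, apply Theorem~\ref{3} to obtain the extraction and the limit point, and then use the additivity $\varphi(w_1\star w_2)=\varphi(w_1)+\varphi(w_2)$ together with Proposition~\ref{4} to convert the $\textsl{R}_1$-limit over $\widetilde{E}(\vec{u})$ into the uniform $FS$-limit over $FS[(s_n(i_n,j_n))_{n\in\nat}]$. Your explicit remarks that commutativity of $S$ is what makes $\varphi$ additive under the interleaved concatenation, and that membership of each $w_n$ in $\widetilde{L}_0(\Sigma,\vec{k};\upsilon)$ guarantees nonempty $V_n^{\pm}$, are details the paper leaves implicit but are exactly the right points to check.
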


\begin{cor}\label{cor: 21} Let $(S,+)$ be a commutative semigroup and $(y_{n})_{n \in \zat^\ast}\subseteq S$. For every subset $\{x_s : s\in S\}$ of a compact metric space $(X,d)$ and functions $p,q : \nat \rightarrow \nat$ there exist $x_0\in X$ and  $(a_n)_{n\in \nat}\subseteq FS[(y_n)_{n\in \mathbb{Z}^\ast}],$ $(b_n)_{n\in \nat}\subseteq FS[(y_n)_{n\in \nat}]$ and $(c_n)_{n\in \nat}\subseteq FS[(y_{-n})_{n\in \nat}]$ such that
\begin{center}
$R_1$-$\lim_{s\in FS[ (a_n + p(i_n) b_n + q(j_n) c_n)_{n\in \nat}]}x_{s}=x_0$ (in particular $x_{a_n + p(i_n) b_n + q(j_n) c_n}\rightarrow x_0$)
\end{center}
uniformly for every $((i_n,j_n))_{n\in \nat}\subseteq \nat\times\nat$ with $1\leq i_n,j_n\leq n$.
\end{cor}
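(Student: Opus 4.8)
The plan is to obtain Corollary~\ref{cor: 21} as a direct specialization of Theorem~\ref{20}, feeding the latter a doubly-indexed family that encodes the maps $p$ and $q$ as multiplicities. Recall that for $\nu\in\nat$ and $y\in S$ the symbol $\nu y$ denotes the $\nu$-fold sum $y+\cdots+y$, which makes sense in any commutative semigroup. Given the single sequence $(y_n)_{n\in\zat^\ast}$ and the maps $p,q:\nat\to\nat$, I would fix $\Sigma$ and $\vec{k}$ as in the semigroup setup, choosing $\vec{k}$ with $k_n=k_{-n}=n$ so that the substitution ranges become exactly $1\le i_n,j_n\le n$, and I would set, for every $l\in\zat^\ast$,
\[
y_{l,n}=\begin{cases} p(l)\,y_n, & n>0,\\[2pt] q(-l)\,y_n, & n<0.\end{cases}
\]
With this convention a positive substitution value $i\ge 1$ produces $y_{i,n}=p(i)y_n$, while a negative substitution value $-j$ (with $j\ge 1$) produces $y_{-j,n}=q(j)y_n$; the sign flip in the first index is precisely what is needed to match the term $y_{-j,t}$ occurring in Theorem~\ref{20}.

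Next I would apply Theorem~\ref{20} verbatim to the family $(y_{l,n})$ and the given net $\{x_s:s\in S\}$. This produces $x_0\in X$ and, for each $n$, a function
\[
s_n(i,j)=\sum_{t\in C_n}y_{w_t,t}+\sum_{t\in V_n^+}y_{i,t}+\sum_{t\in V_n^-}y_{-j,t},
\]
with the index sets $C_n=C_n^-\cup C_n^+$, $V_n^+\subseteq\nat$, $V_n^-\subseteq\zat^-$ separated as in that theorem, and with $R_1$-$\lim_{s\in FS[(s_n(i_n,j_n))_{n\in\nat}]}x_s=x_0$ uniformly. By the choice of the family the two variable blocks factor:
\[
\sum_{t\in V_n^+}y_{i,t}=p(i)\,b_n,\qquad \sum_{t\in V_n^-}y_{-j,t}=q(j)\,c_n,
\]
where $b_n:=\sum_{t\in V_n^+}y_t$ and $c_n:=\sum_{t\in V_n^-}y_t$. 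Since $V_n^+$ consists of distinct positive indices and $V_n^-$ of distinct negative indices, these are genuine finite sums, so $b_n\in FS[(y_n)_{n\in\nat}]$ and $c_n\in FS[(y_{-n})_{n\in\nat}]$. Writing $a_n:=\sum_{t\in C_n}y_{w_t,t}$ we get $s_n(i_n,j_n)=a_n+p(i_n)b_n+q(j_n)c_n$, and feeding this back into the $R_1$-limit yields the displayed conclusion; the ``in particular'' clause follows by restricting to the singleton sets $F=\{n\}$ in the uniform limit.

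I expect the only delicate point — and hence the main obstacle — to be the constant block $a_n$. With the family above a constant letter contributes the \emph{multiple} $p(w_t)y_t$ (respectively $q(-w_t)y_t$), so that a priori $a_n$ lies only in the subsemigroup generated by $(y_n)_{n\in\zat^\ast}$, not in $FS[(y_n)_{n\in\zat^\ast}]$ as a sum of distinct terms. To repair this I would exploit that, across the blocks produced by Theorem~\ref{20}, the positions carrying constant letters are disjoint from those carrying the surviving variables: one may therefore tailor the family on the ``constant'' positions, declaring $y_{l,t}=y_t$ for every $l$ there while retaining the multiplicative rule on the variable positions. Then $a_n=\sum_{t\in C_n}y_t$ becomes a bona fide element of $FS[(y_n)_{n\in\zat^\ast}]$, and the separation conditions on $C_n^\pm,V_n^\pm$ together with the uniformity of the convergence are inherited directly from Theorem~\ref{20}. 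The remaining verifications are routine once the three-way split $a_n+p(i_n)b_n+q(j_n)c_n$ is in place.
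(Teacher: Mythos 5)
Your main line is precisely the paper's proof: the authors' entire argument for Corollary~\ref{cor: 21} is the single sentence ``set $y_{l,n}=p(l)y_n$ for $l\in\nat$ and $y_{l,n}=q(-l)y_n$ for $l\in\zat^-$ and apply Theorem~\ref{20}''. Your case split on the sign of the position $n$ rather than of the letter $l$ is equivalent on every pair $(l,n)$ that actually occurs, since in an $\omega$-$\mathbb{Z}^\ast$-located word the letters at positive (resp.\ negative) positions have positive (resp.\ negative) index. The factorizations $\sum_{t\in V_n^+}y_{i,t}=p(i)\,b_n$, $\sum_{t\in V_n^-}y_{-j,t}=q(j)\,c_n$ and the singleton-$F$ reading of the ``in particular'' clause are exactly as intended.

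Where you go beyond the paper is the discussion of $a_n$, and there your patch does not work as written. You are right that with this family $a_n=\sum_{t\in C_n^+}p(w_t)y_t+\sum_{t\in C_n^-}q(-w_t)y_t$ lies a priori only in the subsemigroup of $S$ generated by the $y_t$, not in $FS[(y_n)_{n\in\zat^\ast}]$ as defined (sums over \emph{distinct} indices); the paper's one-line proof silently ignores this. But ``tailoring the family on the constant positions'' is circular: the sets $C_n$ and $V_n^{\pm}$ are part of the \emph{output} of Theorem~\ref{20} --- they come from the extraction produced by Theorem~\ref{3} applied to the net $\{x_{\varphi(w)}\}$, which already presupposes a fixed family $(y_{l,n})$ --- so the family cannot be defined differently on positions according to whether they end up constant or variable; re-running the theorem with a modified family yields different sets $C_n$, $V_n^{\pm}$. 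Thus your argument (like the paper's) actually delivers the displayed $R_1$-limit with $a_n$ only in the generated subsemigroup; your instinct to flag the $FS$-membership of $a_n$ as the delicate point was sound, but the proposed repair does not close it.
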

\begin{proof} Set $y_{l,n}=p(l) y_n$ for every $l\in\nat$ and $y_{l,n}=q(-l)y_n$ for every $l\in \zat^-$ and apply Theorem~\ref{20}.
\end{proof}

Let $(S,+)$ be a commutative semigroup and $(y_{l,n})_{n \in \zat^\ast}\subseteq S$ for every $l\in \zat^\ast.$ We call a family $\{T^s\}_{s\in S}$ of continuous functions of a topological space $X$ into itself an  $\widetilde{L}(\Sigma,\vec{k})$-system of $S$ if $T^{\varphi(w_1)}T^{\varphi(w_2)}=T^{\varphi(w_1\star w_2)}$ for $w_1<_{\textsl{R}_1}w_2 \in \widetilde{L}(\Sigma,\vec{k})$.

\begin{thm}\label{22}
Let $(S,+)$ be a commutative semigroup, $(y_{l,n})_{n \in \zat^\ast}\subseteq S$ for every $l\in \zat^\ast$ and $\{T_1^s\}_{s\in S},\ldots,$ $\{T_m^s\}_{s\in S}$ be $m$ $\widetilde{L}(\Sigma,\vec{k})$-systems of transformations of a compact metric space $X,$ all contained in a commutative group $G$ of homeomorphisms of $X$. Then, there exist $x_0\in X$ and, for every $n\in\nat$, functions
$s_n : \{1,\ldots,k_n\}\times \{1,\ldots,k_{-n}\}\rightarrow X $ with \begin{center} $s_n(i,j)=\sum_{t \in C_n}y_{w_{t},t}+\sum_{t \in V^+_n}y_{i,t}+\sum_{t \in V^-_n}y_{-j,t},$
\end{center}
where $C_n=C_n^-\cup C_n^+\subseteq \zat^\ast$ with $\max C_{n+1}^-<\min C_{n}^-<\max C_n^+<\min C^+_{n+1},$ $ V^+_n\subseteq \nat$ with $\max V^+_n<\min V^+_{n+1}$ and $ V^-_n\subseteq \zat^-$ with $\min V^-_n >\max V^-_{n+1}$, such that
\begin{center}
$\text{$R_1$-}\lim_{s\in FS[ (s_n(i_n,j_n))_{n\in \nat}]}T_i^s (x_0)=x_0$  for every $1\leq i\leq m,$
\end{center}
uniformly for every $((i_n,j_n))_{n\in \nat}\subseteq \nat\times\nat$ with $1\leq i_n\leq k_n,$ $1\leq j_n\leq n.$

Moreover, in case $(X,G)$ is minimal, the set of such points $x_0$ is a dense subset of $X$.
\end{thm}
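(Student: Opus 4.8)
The plan is to reduce Theorem~\ref{22} to the multiple recurrence Theorem~\ref{114} by transferring the $S$-indexed systems to genuine $\widetilde{L}(\Sigma,\vec{k})$-systems of $X$ through the map $\varphi$. Concretely, for each $1\leq i\leq m$ and each $w\in\widetilde{L}(\Sigma,\vec{k})$ I would set $\widehat{T}_i^w=T_i^{\varphi(w)}$. Since $\varphi$ satisfies $\varphi(w_1\star w_2)=\varphi(w_1)+\varphi(w_2)$ whenever $w_1<_{\textsl{R}_1}w_2$ (because $S$ is commutative and $\varphi$ merely sums the letter-contributions over the disjoint domains), the defining relation $T_i^{\varphi(w_1)}T_i^{\varphi(w_2)}=T_i^{\varphi(w_1\star w_2)}$ of an $\widetilde{L}(\Sigma,\vec{k})$-system of $S$ gives $\widehat{T}_i^{w_1}\widehat{T}_i^{w_2}=\widehat{T}_i^{w_1\star w_2}$ for $w_1<_{\textsl{R}_1}w_2$. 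Thus each $\{\widehat{T}_i^w\}_{w\in\widetilde{L}(\Sigma,\vec{k})}$ is an $\widetilde{L}(\Sigma,\vec{k})$-system of $X$ in the earlier sense; and since every $T_i^s$ lies in $G$, so does every $\widehat{T}_i^w$, whence all $m$ transferred systems are contained in the commutative group $G$.

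Next I would fix any $\vec{w}=(w_n)_{n\in\nat}\in\widetilde{L}^{\infty}(\Sigma,\vec{k};\upsilon)$ and apply Theorem~\ref{114} to these $m$ systems. This produces $x_0\in X$ and an extraction $\vec{u}=(u_n)_{n\in\nat}\prec\vec{w}$ with $\text{$R_1$-}\lim_{w\in\widetilde{E}(\vec{u})}\widehat{T}_i^w(x_0)=x_0$ for every $1\leq i\leq m$, together with the density of such points $x_0$ when $(X,G)$ is minimal, which is exactly the final clause of the statement.

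It then remains to recast this conclusion in the semigroup language. Here I would put $s_n=\varphi(u_n)$ for each $n$; since $u_n\in\widetilde{EV}(\vec{w})$ straddles both a negative and a positive part of its domain, the extended $\varphi$ yields precisely $s_n(i,j)=\sum_{t\in C_n}y_{w_t,t}+\sum_{t\in V_n^+}y_{i,t}+\sum_{t\in V_n^-}y_{-j,t}$, with $C_n,V_n^{\pm}$ read off from $dom(u_n)$. Using additivity of $\varphi$ once more, for any extracted word $w=u_{n_1}(i_{n_1},j_{n_1})\star\cdots\star u_{n_\lambda}(i_{n_\lambda},j_{n_\lambda})$ one has $\widehat{T}_i^w(x_0)=T_i^{\varphi(w)}(x_0)=T_i^{s_{n_1}(i_{n_1},j_{n_1})+\cdots+s_{n_\lambda}(i_{n_\lambda},j_{n_\lambda})}(x_0)$, so the $R_1$-limit over $\widetilde{E}(\vec{u})$ becomes, via Proposition~\ref{4}, the uniform limit $\text{$R_1$-}\lim_{s\in FS[(s_n(i_n,j_n))_{n\in\nat}]}T_i^s(x_0)=x_0$ asserted in the theorem.

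The required ordering conditions $\max C_{n+1}^-<\min C_n^-<\max C_n^+<\min C_{n+1}^+$, together with the monotonicity of $V_n^+$ and $V_n^-$, are inherited from the relation $u_n<_{\textsl{R}_1}u_{n+1}$: by definition of $<_{\textsl{R}_1}$, $dom(u_{n+1})$ splits as $A_1\cup A_2$ with $\max A_1<\min dom(u_n)\leq\max dom(u_n)<\min A_2$, forcing the newly adjoined negative indices of $u_{n+1}$ to lie below those of $u_n$ and the new positive indices to lie above. The main obstacle I anticipate is not conceptual but bookkeeping: carefully matching the domains $C_n^{\pm},V_n^{\pm}$ of the substituted, concatenated words $u_n(i,j)$ to the displayed inequalities and verifying that the substitution variables $i,j$ enter $s_n(i,j)$ with the correct signs, namely $y_{i,t}$ for $t\in V_n^+$ and $y_{-j,t}$ for $t\in V_n^-$. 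Everything else is a direct transfer of Theorem~\ref{114} through the additive map $\varphi$.
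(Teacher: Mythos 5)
Your proposal is correct and follows essentially the same route the paper intends: the paper offers no separate proof of Theorem~\ref{22}, but the surrounding text of Section~3 sets up exactly this transfer — define the pulled-back systems via $\varphi$, verify the concatenation identity from commutativity of $S$, apply Theorem~\ref{114}, and read the conclusion back through the additivity $\varphi(w_1\star w_2)=\varphi(w_1)+\varphi(w_2)$ and Proposition~\ref{4}. Your bookkeeping of $C_n^{\pm}$, $V_n^{\pm}$ from the relation $u_n<_{\textsl{R}_1}u_{n+1}$ matches the paper's setup.
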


\medskip

{\bf{Acknowledgments.}} We thank Professor S. Negrepontis for helpful discussions and support during the preparation of this paper. The first author acknowledge partial support from the Kapodistrias research grant of Athens University. The second author acknowledge partial support from the State Scholarship Foundation of Greece.

\medskip

\bigskip
{\footnotesize
\noindent
\newline
Vassiliki Farmaki:
\newline
{\sc Department of Mathematics, Athens University, Panepistemiopolis, 15784 Athens, Greece}
\newline
E-mail address: vfarmaki@math.uoa.gr

\medskip

\noindent
Andreas Koutsogiannis:
\newline
{\sc Department of Mathematics, Athens University, Panepistemiopolis, 15784 Athens, Greece}
\newline
E-mail address: akoutsos@math.uoa.gr

\end{document}